\newcommand{\NN}{\mathbb{N}}
\newcommand{\CC}{\mathbb{C}}
\newcommand{\TT}{\mathbb{T}}
\newcommand{\RR}{\mathbb{R}}
\newcommand{\EE}{\mathbb{E}}
\newcommand{\PP}{\mathbb{P}}
\newcommand{\MB}{\mathcal{M}^\mathcal{B}}
\newcommand{\MD}{\mathcal{M}^\mathcal{D}}
\newcommand{\MG}{\mathcal{M}^\mathcal{G}}
\newcommand{\MDO}{\mathcal{M}^{\mathcal{D}_0}}
\newcommand{\MMR}{\mathcal{M}^\mathcal{R}}
\newcommand{\MMRO}{\mathcal{M}^{\mathcal{R}_0}}
\newcommand{\RRR}{\mathcal{R}}
\newcommand{\II}{\mathcal{I}}
\newcommand{\BB}{\mathcal{B}}
\newcommand{\DD}{\mathcal{D}}
\newcommand{\GG}{\mathcal{G}}
\newcommand{\EEE}{\mathcal{E}}
\newcommand{\JJ}{\mathcal{J}}
\newcommand{\PPP}{\mathcal{P}}
\theoremstyle{plain}
\newtheorem{theorem}{Theorem}[section]
\newtheorem{lemma}[theorem]{Lemma}
\newtheorem{fact}[theorem]{Fact}
\newtheorem*{theorem4.3}{Theorem 4.3'}
\newtheorem{proposition}[theorem]{Proposition}
\numberwithin{equation}{section}
\begin{document}

\title[Differentiation of integrals in the infinite-dimensional torus]{On differentiation of integrals in the infinite-dimensional torus}

\author{Dariusz Kosz}
\address{ \newline Dariusz Kosz
	\newline Faculty of Pure and Applied Mathematics
	\newline Wroc\l aw University of Science and Technology 
	\newline Wybrze\.ze Wyspia\'nskiego 27, 50-370 Wroc\l aw, Poland
	\newline \textit{E-mail address:} \textnormal{dariusz.kosz@pwr.edu.pl}		
}

\thanks{Research supported by the National Science Centre of Poland, project no. 2016/21/N/ST1/01496.
}

\begin{abstract} We answer the recently posed questions regarding the problem of differentiation of integrals for the Rubio de Francia basis $\RRR$ in the infinite torus $\TT^\omega$. In particular, we prove that $\RRR$ does not differentiate $L^\infty(\TT^\omega)$. Some remarks about differentiation in the context of arbitrary bases are also included.
	
	\medskip	
	\noindent \textbf{2010 Mathematics Subject Classification:} Primary 43A75, 42B25.
	
	\medskip
	\noindent \textbf{Key words:} infinite-dimensional torus, differentiation of integrals, differentiation basis, maximal operator.
\end{abstract}

\maketitle

\section{Introduction}

The study of the infinite torus $\TT^\omega$ arises naturally in various places as a result of efforts to extend the multidimensional analysis to the case of infinitely many dimensions. There are dozens of works dealing with this object in the context of many different branches of mathematics, including i.a. potential theory \cite{Be, B, BSC1, BSC2, BCSC}, ergodic theory \cite{L, AT} and, what is of our interest, harmonic analysis \cite{RDF2, FR1}.

Recently, Fern{\'a}ndez and Roncal \cite{FR2} have introduced a decomposition of Calder{\'o}n--Zygmund type in $\TT^\omega$ in order to prove some results on differentiation of integrals in this setting. The analysis provided there was largely inspired by the article of Rubio de Francia \cite{RDF1}, where an analogous problem in the more general context of locally compact groups was considered. We also refer the reader to previous works \cite{BF, G}, where the issue of differentiation of integrals in $\RR^n$ was widely discussed. 

The problem of differentiation of integrals always appears in connection with some differentiation basis, that is, the family of sets which are used to 'approximate' points of the space under consideration. In \cite{FR2} three types of such bases were studied: the {\it restricted Rubio de Francia basis} $\RRR_0$, the {\it Rubio de Francia basis} $\RRR$, and the {\it extended Rubio de Francia basis} $\RRR^\ast$. In the case of $\RRR_0$ some good differentiation properties are ensured by the appropriate estimate for the associated maximal operator $\MMRO$. On the other hand, the authors were able to apply the idea of Jessen \cite{J1, J2} in order to show that the corresponding result on differentiation for $\RRR^\ast$ is false. Finally, the case of $\RRR$ turned out to be more complicated and the questions regarding both differentiation and the behavior of the maximal operator $\MMR$ remained open.   

The aim of this article is to answer the questions of \cite{FR2} in the context of the Rubio de Francia basis $\RRR$. In particular, we obtain the following result, which is an immediate consequence of Theorem~\ref{thm:1} (see Section 3).

\begin{theorem}
	The Rubio de Francia basis $\RRR$ does not differentiate $L^\infty(\TT^\omega)$.
\end{theorem}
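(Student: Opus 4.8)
The plan is to reduce the statement to a single, explicitly constructed counterexample. To show that $\RRR$ does not differentiate $L^\infty(\TT^\omega)$ it suffices to exhibit one measurable set $E \subseteq \TT^\omega$ whose indicator $\chi_E \in L^\infty$ is not differentiated by $\RRR$. Concretely, I would aim for an $E$ with $|E|$ as small as we please but whose upper $\RRR$-density
\[
\overline{D}_\RRR(E,x) = \limsup_{\substack{R \ni x,\, R \in \RRR \\ \mathrm{diam}(R) \to 0}} \frac{|E \cap R|}{|R|}
\]
is bounded below by $\tfrac12$ on a set of positive measure. Since such an $E$ satisfies $\chi_E(x) = 0$ on a positive-measure subset of $E^c$ on which the upper density is at least $\tfrac12$, the averages of $\chi_E$ cannot converge to $\chi_E$ there, and differentiation fails.

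The engine producing such a set is a maximal-operator estimate, which is the genuine content of the argument and is exactly the role I expect Theorem~\ref{thm:1} to play: there exist measurable sets $A_k \subseteq \TT^\omega$ with $|A_k| \to 0$ but
\[
\bigl|\{\, x : \MMR \chi_{A_k}(x) > \tfrac12 \,\}\bigr| \ge c
\]
for some absolute constant $c>0$ and all $k$, the maximal averages being attained on admissible rectangles of arbitrarily small diameter. The failure of these super-level sets to shrink as $|A_k| \to 0$ is precisely the obstruction to the density (halo) property and hence to differentiation.

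Constructing the $A_k$ is the main obstacle, and it is here that the geometry of $\RRR$ is decisive. For the extended basis $\RRR^\ast$ the unrestricted shape of the rectangles permits Jessen's classical oscillation construction directly; for $\RRR$, by contrast, the admissible rectangles are subject to the defining geometric restriction of the basis (roughly, a monotonicity constraint on the side-lengths along the coordinate index), so a rectangle cannot be reshaped freely and the bad set must instead exploit the infinitude of coordinates. I would build $A_k$ from thin slabs distributed over the first $N_k$ coordinates, arranged so that for every point $x$ in a set of measure $\ge c$ there is an \emph{admissible} rectangle $R \ni x$ simultaneously meeting a definite proportion of the slabs; a counting estimate then forces $\MMR \chi_{A_k}(x) > \tfrac12$, while $|A_k|$ is kept small by sending $N_k \to \infty$. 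Checking that the rectangles realizing these averages respect the constraint defining $\RRR$, and can be taken of small diameter, is the delicate point that separates this case from $\RRR^\ast$.

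Finally I would assemble a single witness from the sequence $A_k$. Passing to a subsequence with $\sum_k |A_{k}| < c$ and setting $E = \bigcup_k A_{k}$, one has $|E| < 1$, while a reverse Fatou / Borel--Cantelli argument applied to the sets $G_k = \{\MMR \chi_{A_k} > \tfrac12\}$ shows that for almost every $x$ in a set of measure $\ge c$ there are, for infinitely many $k$, admissible rectangles $R_k \ni x$ with $\mathrm{diam}(R_k) \to 0$ and $\tfrac{1}{|R_k|}\int_{R_k}\chi_E \ge \tfrac{1}{|R_k|}\int_{R_k}\chi_{A_k} \ge \tfrac12$. Hence $\overline{D}_\RRR(E,x) \ge \tfrac12$ on a set of positive measure meeting $E^c$, so $\RRR$ fails to differentiate $\chi_E$ and therefore does not differentiate $L^\infty(\TT^\omega)$. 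The reductions of the first and last paragraphs are routine; all the difficulty is concentrated in the maximal estimate of the two middle paragraphs.
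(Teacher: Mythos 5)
Your outer reductions are correct and coincide with the paper's strategy: failure of differentiation on $L^\infty(\TT^\omega)$ is witnessed by an indicator $\chi_A$ with $m(A)$ small whose averages along basis sets of shrinking diameter stay above a fixed positive constant on a set of positive measure disjoint from $A$, and assembling a single witness from a sequence $A_k$ via reverse Fatou applied to $G_k=\{\MMR\chi_{A_k}>c\}$ is fine (the paper instead places the $A_k$ on disjoint blocks of coordinates and uses the second Borel--Cantelli lemma to make the bad set have full measure, but positive measure suffices for the stated conclusion). The problem is that the entire content of the theorem sits in your middle step --- the uniform estimate $m(\{\MMR \chi_{A_k} > c\}) \geq c' > 0$ with $m(A_k) \to 0$, realized by small-diameter basis sets --- and your sketch of that step is not only incomplete but, as described, unworkable.

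Concretely: \emph{(a)} ``meeting a definite proportion of the slabs'' plus ``a counting estimate'' cannot force $\MMR\chi_{A_k}(x) > \tfrac12$; an admissible rectangle can meet every slab while the slabs occupy an arbitrarily small fraction of its measure, so no count of slabs met bounds the average $(\chi_{A_k})_R$ from below. \emph{(b)} Thin slabs are structurally the wrong object: if the slab pieces recur in a coordinate with gaps smaller than the corresponding side of the rectangle, the density of their union inside any interval-rectangle is comparable to its global measure, hence tends to $0$ with $m(A_k)$; if the gaps are larger, only a small-measure set of points admits a rectangle catching the slabs with definite density --- this is exactly the regime of the paper's Propositions 3.1 and 3.2, which yield $L^1$ counterexamples only because of huge normalizing constants, something an indicator cannot provide. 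What actually works (the paper's Lemmas 3.4--3.6) is a \emph{product-type} set: fix one small-diameter interval $Q = I_1\times\dots\times I_k\times\TT^{k,\omega} \in \BB$ with $|I_i|=r_i$, and on a block of $(L+1)^2$ coordinates let $A$ be the union of translates of the core box $\prod_i(0,r_i)$ over a grid of spacing $(1+\tfrac{1}{L+1})r_i$. Then $m(A)\le(\tfrac{L+1}{L+2})^{(L+1)^2}<e^{-L}$, because the per-coordinate density deficit compounds over \emph{all} $(L+1)^2$ coordinates; yet, by Chebyshev's inequality for a binomial variable, outside a set of relative measure $<\tfrac12$ each point of a dilated grid cell has fewer than $4(L+1)$ coordinates lying in the thin shells, and shifting $Q$ by $r_i/(L+1)$ in \emph{only those} coordinates gives a translate of $Q$ containing the point with average $(\chi_A)_{Q'} \geq (\tfrac{L}{L+1})^{4(L+1)} > e^{-8}$. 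This asymmetry --- the measure pays in every coordinate, the overlap pays only in the few bad ones --- is the engine of the proof, and nothing in your proposal supplies it. Note finally that the point you flag as delicate (admissibility of the realizing rectangles) is actually free: by translation invariance one only ever uses translates of the single fixed $Q$, so all the difficulty is concentrated precisely in the overlap estimate you left open.
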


Moreover, we want to shed more light on the problem of differentiation posed for a general basis $\BB$ by providing several instructive observations and analyzing some other examples of bases. In particular, we try to show what exactly is the relationship between the problem of differentiation for $\BB$ and the properties of the operator $\MB$. We give an answer assuming a certain additional condition on $\BB$.  
 
\section{Preliminaries}

Throughout the article by $\TT$ we mean the one-dimensional torus $\{ z \in \CC : |z| = 1 \}$, which will be naturally understood as the interval $[0,1]$ with its endpoints identified (this can be done by using the relation $t \leftrightarrow e^{2 \pi i t}$, $t \in [0,1)$). 
We let $\TT^\omega$ be the product of countably many copies of $\TT$. Then $\TT^\omega$ is a compact group with identity element ${\bf 0} = (0,0,\dots)$ and the normalized Haar measure $m$, which coincides with the product of countably many copies of the Lebesgue measure $| \cdot |$ on $\TT$ (or, more precisely, on $[0,1)$). In several places we write $\TT^\omega = \TT^n \times \TT^{n, \omega}$, where $n \in \NN$ is some positive integer. Although the second object in this product is a copy of $\TT^\omega$ itself, we use the symbol $\TT^{n, \omega}$ to indicate which coordinates are considered here. Similarly, we write ${\bf 0}^{n, \omega}$ for the zero vector whenever we want it to be an element of $\TT^{n, \omega}$.  

 We say that a set $I \subset \TT^\omega$ is an {\it interval} if $I = \prod_{n \in \NN} I_n$, where for each $n \in \NN$ the set $I_n \subset \TT$ is an interval (which can be of the form $[0, \frac{1}{3}) \cup [\frac{2}{3}, 1)$, for example) and
$$
\exists_{N \in \NN} \ \forall_{n > N} \ \big( I_n = \TT \big).
$$     
The measure of $I$ is then equal to $\prod_{n \in \NN} |I_n| = \prod_{n = 1}^N |I_n|$. 

We introduce the distance between two elements of $\TT^\omega$, $g = (g_1, g_2, \dots)$ and $h = (h_1, h_2, \dots)$, by using the formula
$$
\rho(g,h) = \sum_{n=1}^\infty \frac{\min\{|g_n - h_n|, 1 - |g_n - h_n|\}}{2^n}.
$$
Then, for a measurable set $E \subset \TT^\omega$, we define its diameter 
$$ 
{\rm diam}(E) = \sup_{g,h \in E} \rho(g,h).
$$
The $\sigma$-algebra of Borel sets in $\TT^\omega$ is the smallest $\sigma$-algebra which contains the open intervals or, equivalently, the open balls with respect to $\rho$.

Suppose that for each $g \in \TT^\omega$ we have a collection $\BB(g)$ of sets of strictly positive measure whose topological
closures contain $g$. We say that a family $\{S_n : n \in \NN\}$ {\it contracts} to $g$ if $\{S_n : n \in \NN\} \subset \BB(g)$ and $\lim_{n \rightarrow \infty} {\rm diam}(S_n) = 0$; in each such case we write $S_n \Rightarrow g$. Finally, the whole family $\BB = \bigcup_{g \in \TT^\omega} \BB(g)$ equipped with the relation $\Rightarrow$ is called a {\it differentiation basis} in $\TT^\omega$ if for each $g \in \TT^\omega$ there exists a family which contracts to $g$. Throughout the article, unless otherwise stated, we deal with the bases of non-centered type, that is, we assume (without further mention) that $\BB(g) = \{ B \in \BB : g \in \overline{B} \}$ holds for each $g \in \TT^\omega$.

For an integrable function $f \in L^1(\TT^\omega)$ and a set $E \subset \TT^\omega$ satisfying $m(E) > 0$ we denote the average value of $f$ on $E$ by $f_E$, that is,
$$
f_E = \frac{1}{m(E)} \int_E f \, dm.
$$
Then, given a basis $\BB$ let us define the associated maximal operator $\MB$ by
$$
\MB f(g) = \sup_{B \in \BB(g)} |f_B|, \qquad g \in \TT^\omega,
$$        
for each $f \in L^1(\TT^\omega)$. Finally, we also introduce the following truncated operator
$$
\MB_{r_0} f(g) = \sup_{\substack{B \in \BB(g) \\ {\rm diam}(B) < r_0}} |f_B|, \qquad g \in \TT^\omega,
$$
where $r_0 > 0$ is some fixed positive number.

It is usually an important issue to study mapping properties of maximal operators. In the present work we are particularly interested in the weak type $(1,1)$ inequality. To be precise, the operator $\MB$ is said to be of {\it weak type} $(1,1)$ if
$$
\lambda m(\{ g \in \TT^\omega : \MB f(g) > \lambda \}) \leq C \|f\|_1, \qquad f \in L^1(\TT^\omega), \ \lambda > 0,
$$ 
holds for some numerical constant $C > 0$ independent of $f$ and $\lambda$.

It is well known that in many situations there are deep connections between the existence of a limit of certain sequence of operators and the behavior of the associated maximal operator (see \cite{St}, for example). This is the case also for the infinite torus and the averaging operators $\mathcal{A}_E f = f_E$, $E \subset \TT^\omega$. Namely, if $\MB$ is of weak type $(1,1)$, then one can obtain the following analogue of the Lebesgue differentiation theorem. 

\begin{fact}\label{fact:1}
{\rm (cf. \cite[Theorem 1.1]{GW})} Let $\BB$ be an arbitrary differentiation basis. If $\MB$ is of weak type $(1,1)$, then for each $f \in L^1(\TT^\omega)$ and a.e. $g \in \TT^\omega$ (the set of $g$'s may depend on $f$) we have
\begin{equation}\tag{D}\label{cond:1}
\Big( \{S_n : n \in \NN \} \subset \BB \ \wedge \ S_n \Rightarrow g \Big) \implies \Big( \lim_{n \rightarrow \NN} f_{S_n} = f(g) \Big).
\end{equation}	
\end{fact}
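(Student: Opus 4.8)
The plan is to run the classical argument that a weak type $(1,1)$ maximal inequality, combined with a dense subclass of functions on which convergence is automatic, forces almost-everywhere convergence. The dense subclass will be $C(\TT^\omega)$, the continuous functions on the compact metric space $(\TT^\omega, \rho)$; since $m$ is a Radon probability measure and $\rho$ induces the product topology, whose Borel sets are exactly those described in Section 2, the density of $C(\TT^\omega)$ in $L^1(\TT^\omega)$ is standard.

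First I would record the elementary fact that continuous functions differentiate trivially. If $\phi \in C(\TT^\omega)$ and $S_n \Rightarrow g$, then $g \in \overline{S_n}$ forces $\rho(g, x) \leq {\rm diam}(S_n)$ for every $x \in S_n$ (the diameter of a set and of its topological closure coincide). By uniform continuity of $\phi$ on the compact space $\TT^\omega$ we get $|\phi(x) - \phi(g)| \leq \omega_\phi({\rm diam}(S_n))$ for all $x \in S_n$, where $\omega_\phi$ denotes the modulus of continuity; averaging yields $|\phi_{S_n} - \phi(g)| \leq \omega_\phi({\rm diam}(S_n)) \to 0$. Hence $\lim_n \phi_{S_n} = \phi(g)$ for every $g$ and every family contracting to $g$.

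Next I would introduce the oscillation functional
$$
\Lambda f(g) = \sup\Big\{ \limsup_{n \to \infty} |f_{S_n} - f(g)| : \{S_n : n \in \NN\} \subset \BB(g), \ S_n \Rightarrow g \Big\},
$$
so that condition \eqref{cond:1} holds at $g$ precisely when $\Lambda f(g) = 0$ (and vacuously when no family contracts to $g$). Given $\varepsilon > 0$, I would choose $\phi \in C(\TT^\omega)$ with $\|f - \phi\|_1 < \varepsilon$ and set $h = f - \phi$. Using the previous step and the triangle inequality inside the $\limsup$, I would obtain the pointwise majorization
$$
\Lambda f(g) \leq \Lambda \phi(g) + \Lambda h(g) = \Lambda h(g) \leq \MB h(g) + |h(g)|,
$$
the last bound coming from $|h_{S_n} - h(g)| \leq |h_{S_n}| + |h(g)| \leq \MB h(g) + |h(g)|$. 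Consequently, for every $\lambda > 0$,
$$
\{ g : \Lambda f(g) > \lambda \} \subset \{ g : \MB h(g) > \lambda/2 \} \cup \{ g : |h(g)| > \lambda/2 \}.
$$

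Finally I would estimate the two sets on the right. The weak type $(1,1)$ hypothesis gives $m(\{ \MB h > \lambda/2 \}) \leq (2C/\lambda)\|h\|_1 < 2C\varepsilon/\lambda$, while Chebyshev's inequality gives $m(\{ |h| > \lambda/2 \}) \leq (2/\lambda)\|h\|_1 < 2\varepsilon/\lambda$. Since the resulting bound $2(C+1)\varepsilon/\lambda$ holds for arbitrary $\varepsilon > 0$, the (outer) measure of $\{ \Lambda f > \lambda \}$ vanishes for each $\lambda > 0$; taking a countable union over $\lambda = 1/k$ then shows $\Lambda f = 0$ almost everywhere, which is exactly \eqref{cond:1}. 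The only points requiring care are that $\Lambda f$ need not be a priori measurable, so the measure estimates should be read as outer-measure estimates (legitimate because $\{ \Lambda f > \lambda \}$ is contained in the genuinely measurable set $\{ \MB h + |h| > \lambda \}$), and the verification that ${\rm diam}(S_n)$ controls $\rho(g, \cdot)$ on $S_n$. I expect this measurability bookkeeping to be the only mildly delicate step, the rest being the routine maximal-inequality-implies-convergence scheme.
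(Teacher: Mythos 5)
Your proof is correct, and it is the classical density argument. One remark on how it sits relative to the paper: the paper does not actually prove Fact~\ref{fact:1} --- it is quoted from Guzm\'an--Welland \cite{GW} --- but the scheme you use (approximate $f$ in $L^1(\TT^\omega)$ by a continuous $\phi$, for which convergence of averages holds at \emph{every} point by uniform continuity on the compact metric space; then dominate the oscillation of $h = f - \phi$ by $\MB h(g) + |h(g)|$ and estimate via the weak type $(1,1)$ bound and Chebyshev) is essentially the same scheme the paper runs in the first half of the proof of Theorem~\ref{thm:2}, where the weak type hypothesis is relaxed to $\lim_{k \rightarrow \infty} \delta_k^\BB = 0$: your sets $\{\MB h > \lambda/2\}$ and $\{|h| > \lambda/2\}$ play exactly the roles of $L_{\epsilon,2}(f)$ and $L_{\epsilon,1}(f)$ there. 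So your argument is a correct self-contained substitute for the citation, and indeed a special case of the paper's more general result, since weak type $(1,1)$ forces $\delta_k^\BB = 0$ for all large $k$. The only point to state more carefully is your parenthetical claim that $\{\MB h + |h| > \lambda\}$ is ``genuinely measurable'': measurability of $\MB h$ is not automatic, as it is a supremum over an uncountable family of averages. This is harmless, though, because the weak type hypothesis itself must be read in outer measure (or via a measurable hull) for the same reason, and your fallback to outer-measure estimates then closes the argument, a set of outer measure zero being contained in a measurable null set.
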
	
 
\noindent Given a basis $\BB$ we say that $\BB$ {\it differentiates} $L^1(\TT^\omega)$ if \eqref{cond:1} holds for each $f \in L^1(\TT^\omega)$ and a.e. $g \in \TT^\omega$. Similarly, $\BB$ differentiates $L^p(\TT^\omega)$, $p \in (1, \infty]$, if \eqref{cond:1} holds for each $f \in L^p(\TT^\omega)$ and a.e. $g \in \TT^\omega$. Observe that for $1 \leq p_1 < p_2 \leq \infty$ we have the inclusion $L^{p_2}(\TT^\omega) \subset L^{p_1}(\TT^\omega)$. Thus, if $\BB$ differentiates $L^{p_1}(\TT^\omega)$, then $\BB$ differentiates $L^{p_2}(\TT^\omega)$. 

Let us now briefly describe the Rubio de Francia bases, restricted $\RRR_0$ and non-restricted $\RRR$, which were discussed in \cite{FR2} in the context of differentiation. For each $k \in \NN$ we set $R_k = \{0, \frac{1}{k}, \dots, \frac{k-1}{k} \}$. Then, given $n \in \NN$ we define the finite subgroup $H_n \subset \TT^\omega$ and the open set $V_n \subset \TT^\omega$ by using the following scheme (see \cite{FR2} for more details).

\begin{table}[H]
\caption{The scheme of the objects used to define the Rubio de Francia bases}
	\begin{center}
		\begin{tabular}{ r l l }
			\hline
			$n$ & $H_n$ & $V_n$ \\
			\hline
			$1$ & $R_2 \times \{{\bf 0}^{1, \omega}\}$ 
			& $(0,\frac{1}{2}) \times \TT^{1, \omega}$ \\ 
			$2$ & $R_2 \times R_2 \times \{{\bf 0}^{2, \omega}\}$ 
			& $(0,\frac{1}{2}) \times (0,\frac{1}{2}) \times \TT^{2, \omega}$ \\ 
			$3$ & $R_4 \times R_2 \times \{{\bf 0}^{2, \omega}\}$ & $(0,\frac{1}{4}) \times (0,\frac{1}{2}) \times \TT^{2, \omega}$ \\
			$4$ & $R_4 \times R_4 \times \{{\bf 0}^{2, \omega}\}$ & $(0,\frac{1}{4}) \times (0,\frac{1}{4}) \times \TT^{2, \omega}$ \\ 
			$5$ & $R_4 \times R_4 \times R_2 \times \{{\bf 0}^{3, \omega}\}$ & $(0,\frac{1}{4}) \times (0,\frac{1}{4}) \times (0,\frac{1}{2}) \times\TT^{3, \omega}$ \\ 
			$6$ & $R_4 \times R_4 \times R_4 \times \{{\bf 0}^{3, \omega}\}$ & $(0,\frac{1}{4}) \times (0,\frac{1}{4}) \times (0,\frac{1}{4}) \times\TT^{3, \omega}$ \\
			$7$ & $R_8 \times R_4 \times R_4 \times \{{\bf 0}^{3, \omega}\}$ & $(0,\frac{1}{8}) \times (0,\frac{1}{4}) \times (0,\frac{1}{4}) \times\TT^{3, \omega}$ \\ 
			$8$ & $R_8 \times R_8 \times R_4 \times \{{\bf 0}^{3, \omega}\}$ & $(0,\frac{1}{8}) \times (0,\frac{1}{8}) \times (0,\frac{1}{4}) \times\TT^{3, \omega}$ \\ 
			$9$ & $R_8 \times R_8 \times R_8 \times \{{\bf 0}^{3, \omega}\}$ & $(0,\frac{1}{8}) \times (0,\frac{1}{8}) \times (0,\frac{1}{8}) \times\TT^{3, \omega}$ \\
			$10$ & $R_8 \times R_8 \times R_8 \times R_2 \times \{{\bf 0}^{4, \omega}\}$ & $(0,\frac{1}{8}) \times (0,\frac{1}{8}) \times (0,\frac{1}{8}) \times (0,\frac{1}{2}) \times\TT^{4, \omega}$ \\
			$\cdots$ & $\cdots$ & $\cdots$ \\
			\hline
		\end{tabular}
	\end{center}
\end{table}
\noindent Finally, we let
$$
\RRR_0 = \{ g + V_n : n \in \NN, \, g \in H_n \} \quad (\textit{restricted Rubio de Francia basis}),
$$
and
$$
\RRR = \{ g + V_n : n \in \NN, \, g \in \TT^\omega \} \quad (\textit{Rubio de Francia basis}).
$$
Notice that both bases are considered as bases of non-centered type.   

It was shown \cite[Theorem~9]{FR2} that in the case of $\RRR_0$ the associated maximal operator is of weak type $(1,1)$ (cf. \cite[Theorem~2.10]{D}). Consequently, $\RRR_0$ differentiates $L^1(\TT^\omega)$. On the other hand, little is known so far about the case of $\RRR$. The following questions posed in \cite{FR2} have remained open: 
  
\begin{enumerate}[label=(Q\arabic*)]
\item Can one deduce that $\MMR$ is of weak type $(1,1)$ assuming that $\RRR$ differentiates $L^1(\TT^\omega)$? 
\item Is $\MMR$ of weak type $(1,1)$?
\item Does $\RRR$ differentiate $L^\infty(\TT^\omega)$?	
\end{enumerate}

The rest of the paper is devoted to discuss the three questions mentioned above. Namely, in Section 3 we prove that the answers to both (Q2) and (Q3) are negative (thus, in particular, we see that there is no longer any reason to consider (Q1) in its present form). On the other hand, in Section 4 we deal with some issue related to (Q1), but introduced in the context of arbitrary bases $\BB$. 

\section{On the Rubio de Francia basis $\RRR$}

In this section we deal with the Rubio de Francia basis $\RRR$ introduced before. As mentioned earlier, our aim is to show that the answers to (Q2) and (Q3) are negative. It will also be convenient to formulate the following additional problem:

\begin{enumerate}[label=(Q3')]
	\item Does $\RRR$ differentiate $L^1(\TT^\omega)$?	
\end{enumerate}

Let us point out that the following implications between the answers to the three questions that we are interested in hold:
$$
\Big( {\rm ANS(Q3) = NO} \Big) \implies \Big( {\rm ANS(Q3') = NO} \Big)  \implies \Big( {\rm ANS(Q2) = NO} \Big).
$$
At this point one might observe that it is enough to show that $\RRR$ does not differentiate $L^\infty(\TT^\omega)$. However, it seems to be more instructive to examine all the three problems directly, starting with the easiest one. Thus, the first result that we show here is the following.

\begin{proposition}\label{obs:1}
	$\MMR$ is not of weak type $(1,1)$
\end{proposition}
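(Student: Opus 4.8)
The plan is to disprove the weak type $(1,1)$ inequality directly, by exploiting the feature that distinguishes $\RRR$ from $\RRR_0$: in $\RRR$ we may translate each $V_n$ by an \emph{arbitrary} $h \in \TT^\omega$, not only by the lattice points of $H_n$. The underlying mechanism is purely infinite-dimensional. If $V_n$ constrains exactly $d_n$ coordinates (that is, $V_n = \prod_{j \le d_n}(0,a_j) \times \TT^{d_n,\omega}$ with $\prod_{j\le d_n} a_j = m(V_n) = 2^{-n}$), then the set of admissible translates producing basis sets through a fixed point behaves like the difference set $V_n - V_n$, whose measure is $2^{d_n}$ times that of $V_n$. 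Since $d_n \to \infty$ as $n \to \infty$, this volume growth is unbounded, and it is exactly what forces the weak type constant to blow up.

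To turn this into a counterexample I would fix $n$ and test $\MMR$ against the functions $f_N = \mathbf{1}_{V_N}$ with $N \gg n$; recall $V_N \subseteq V_n$ and $\|f_N\|_1 = m(V_N) = 2^{-N}$. For a point $g$ I would search for a set $B = h + V_n \in \RRR$ with $V_N \subseteq B$ and $g \in \overline{B}$. For any such $B$ one has $(f_N)_B = m(V_N)/m(B) = 2^{n-N}$, hence $\MMR f_N(g) \ge 2^{n-N}$. The crux is then to identify the set $U_n$ of all points $g$ admitting such a $B$ and to estimate $m(U_n)$. Writing $V_N = \prod_{j\le d_N}(0,b_j)\times\TT^{d_N,\omega}$ (so $b_j \le a_j$ for $j \le d_n$), the inclusion $V_N \subseteq h + V_n$ forces $h_j$ to lie in an interval of length $a_j - b_j$ for each constrained coordinate $j \le d_n$ and leaves $h_j$ free for $j > d_n$; the condition $g \in \overline{h + V_n}$ then allows $g_j$ to range over an interval of length $2a_j - b_j$ for $j \le d_n$ and over all of $\TT$ for $j > d_n$. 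Since every $a_j \le \tfrac12$ there is no wrap-around on the torus, and letting $N \to \infty$ (so that $b_j \to 0$) gives $m(U_n) \to \prod_{j\le d_n} 2a_j = 2^{d_n} m(V_n) = 2^{d_n - n}$.

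Putting the pieces together, for $N$ large enough and any $\lambda < 2^{n-N}$ one has $\{\MMR f_N > \lambda\} \supseteq U_n$, so
\[
\frac{\lambda\, m(\{\MMR f_N > \lambda\})}{\|f_N\|_1} \;\gtrsim\; \frac{2^{n-N}\cdot 2^{d_n-n}}{2^{-N}} \;=\; 2^{d_n}.
\]
Choosing for each $n$ a suitable $N = N(n)$ and letting $n \to \infty$, the left-hand side is unbounded because $d_n \to \infty$; hence no finite $C$ can serve in the weak type $(1,1)$ bound, and $\MMR$ is not of weak type $(1,1)$. I expect the only real obstacle to be the bookkeeping behind the measure estimate for $U_n$: one must handle the non-centered (closure) condition and the translate condition coordinate by coordinate and check that the elementary difference-set identity $m(V_n - V_n) = 2^{d_n} m(V_n)$ survives on $\TT^\omega$ (which it does, precisely because the constraining intervals have length at most $\tfrac12$). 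Notably, this is where arbitrary translations are essential: restricting $h$ to $H_n$, as in $\RRR_0$, replaces the factor $2^{d_n}$ by a bounded one, consistently with $\MMRO$ being of weak type $(1,1)$.
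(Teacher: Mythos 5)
Your proof is correct and follows essentially the same route as the paper's: both arguments exploit the fact that an \emph{arbitrary} translate of a fixed basis interval (you use $V_n$; the paper uses $U_n = V_{n^2}$, i.e.\ the subsequence with $d_{n^2}=n$ equal sides) can simultaneously capture a tiny box of mass and any point of a set whose measure is $\approx 2^{d_n} m(V_n)$ — the difference-set gain of one factor $2$ per constrained coordinate — so the weak-type ratio blows up like $2^{d_n}$. The only cosmetic differences are the choice of test set (your origin-anchored $V_N$ with $N\to\infty$ versus the paper's box of half-width $\epsilon_n$ centered at $(\tfrac12,\dots,\tfrac12)$) and the normalization of the test function.
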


\noindent Indeed, given $n \in \NN$ we take $\epsilon_n \in (0, \frac{1}{2^{n+1}})$ and consider $f_n = \frac{1}{m(A_n)} \chi_{A_n}$, where
$$
A_n = \Big(\frac{1}{2} - \epsilon_n, \frac{1}{2} + \epsilon_n \Big)^n \times \TT^{n,\omega}.
$$
We denote
$$
U_n = V_{n^2} = \Big(0, \frac{1}{2^n} \Big)^n \times \TT^{n, \omega} 
$$
and
$$
E_n = \Big(\frac{1}{2} - \frac{1}{2^n} + \epsilon_n, \frac{1}{2} + \frac{1}{2^n} - \epsilon_n \Big)^n \times \TT^{n, \omega}.
$$
Observe that for each $x \in (\frac{1}{2} - \frac{1}{2^n} + \epsilon_n, \frac{1}{2} + \frac{1}{2^n} - \epsilon_n \Big)$ it is possible to find an interval $(a, b) \subset (0, 1)$ such that $b - a = \frac{1}{2^n}$ and $\{x\} \cup (\frac{1}{2} - \epsilon_n, \frac{1}{2} + \epsilon_n ) \subset (a, b)$. Hence, applying this argument $n$ times, we find that for each $g \in E_n$ there exists $h \in \TT^\omega$ satisfying
$$
\{g\} \cup A_n \subset h + U_n.
$$
Therefore, we conclude that
$$
\MMR f_n(g) \geq \frac{\|f_n\|_1}{m(h + U_n)} = 2^{n^2}
$$
holds for each $g \in E_n$, and consequently
\begin{equation}\label{eq:1}
\frac{2^{n^2-1} m(\{g : |\MMR f_n(g) | > 2^{n^2-1} \} ) }{\|f_n\|_1} \geq 2^{n^2-1} m(E_n) 
= 2^{n-1} (1 - 2^n \epsilon_n)^n.
\end{equation}
If $\epsilon_n$ is sufficiently small, then the right hand side of \eqref{eq:1}  is bounded from below by $2^{n-2}$. Thus, since $n \in \NN$ is arbitrary, we see that $\MMR$ is not of weak type $(1,1)$.
\medskip

Our second goal is to answer the additional question (Q3').

\begin{proposition}\label{obs:2}
$\RRR$ does not differentiate $L^1(\TT^\omega)$	
\end{proposition}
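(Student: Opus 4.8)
The plan is to exhibit a single nonnegative $f \in L^1(\TT^\omega)$ together with a set $G$ of positive measure such that for every $g \in G$ one can extract a contracting family $S_j \Rightarrow g$ along which $f_{S_j}$ stays bounded below by a fixed constant while $f(g)$ is strictly smaller; this contradicts \eqref{cond:1} and shows that $\RRR$ does not differentiate $L^1(\TT^\omega)$. The whole function will be built by recycling the functions $f_n$ from the proof of Proposition~\ref{obs:1} at every scale $n$, rescaling them so that their $L^1$-masses form a summable series, and translating them so that their ``shadows'' cover the torus. The essential point, inherited from Proposition~\ref{obs:1}, is that a bump of mass $2^{-n^2}$ can be seen with large average from a whole region $E_n$ of measure $\approx 2^{\,n-n^2}$, i.e. with an amplification factor $\approx 2^{n}$, because the catching set $U_n = V_{n^2}$ has measure $2^{-n^2}$ but only $n$ genuinely small coordinates.

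For the per-scale construction I would fix the threshold $c=1$ and, for each $n$, take the function $f_n$ of Proposition~\ref{obs:1} with $\epsilon_n$ chosen so small that $m(E_n) \ge \tfrac12\, 2^{\,n-n^2}$; recall $\|f_n\|_1 = 1$ and $\MMR f_n \ge 2^{n^2}$ on $E_n$, the relevant averages being attained by catching sets of the form $h + U_n$. Rescaling to $2^{-n^2} f_n$ produces mass $2^{-n^2}$ while keeping $\MMR(2^{-n^2} f_n) \ge 1$ on $E_n$. Now I would use that $\RRR$ contains \emph{all} translates (this is exactly where $\RRR$ differs from $\RRR_0$): choose a finite grid $T_n \subset \TT^\omega$ acting only on the first $n$ coordinates so that $\{ t + E_n : t \in T_n\}$ covers $\TT^\omega$, which requires $|T_n| \approx 2^{\,n^2 - n}$ points since $m(E_n) \approx 2^{\,n-n^2}$, and so that the thin bumps $t + A_n$ stay pairwise disjoint. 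Setting $F_n = 2^{-n^2} \sum_{t \in T_n} \tfrac{1}{m(A_n)}\chi_{t + A_n}$ gives $F_n \ge 0$, $\MMR F_n \ge 1$ on all of $\TT^\omega$ (for $g \in t+E_n$ a catching set $h+U_n \supset (t+A_n)\cup\{g\}$ yields $(F_n)_{h+U_n} \ge 2^{-n^2}\,m(U_n)^{-1} = 1$), and, crucially, $\|F_n\|_1 = |T_n|\,2^{-n^2} \approx 2^{-n}$.

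The summability $\sum_n \|F_n\|_1 \approx \sum_n 2^{-n} < \infty$ is what makes the superposition work. I would set $f = \sum_{n \ge N} F_n$ for a large $N$, so that $f \in L^1(\TT^\omega)$ with $\|f\|_1$ as small as desired, say $\|f\|_1 \le 2^{-(N-2)}$. Since $f \ge F_n \ge 0$, for every $g \in \TT^\omega$ and every $n \ge N$ there is a catching set $B_n^g = h + U_n \in \RRR(g)$ with $g \in B_n^g$, $\mathrm{diam}(B_n^g) = \mathrm{diam}(U_n) \to 0$, and $f_{B_n^g} \ge (F_n)_{B_n^g} \ge 1$; these $B_n^g$ form a contracting family $\Rightarrow g$. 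By Chebyshev's inequality $m(\{f \ge 1\}) \le \|f\|_1 < 1$, so on the complement $G = \{f < 1\}$, which has measure $\ge 1 - 2^{-(N-2)} > 0$, we have $f(g) < 1 \le \liminf_{n} f_{B_n^g}$, and therefore $f_{B_n^g} \not\to f(g)$. Thus \eqref{cond:1} fails on a set of positive measure, as required. (If one prefers only a partial covering $m(t+E_n\ \text{union}) \ge \tfrac12$ at each scale, the same conclusion follows after replacing the pointwise statement by $m(\limsup_n G_n) \ge \tfrac12$ via reverse Fatou.)

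The main obstacle is precisely the bookkeeping behind the summability $\sum_n \|F_n\|_1 < \infty$: one must let the number of translated bumps grow like $|T_n| \approx 2^{\,n^2-n}$ while each contributes mass only $2^{-n^2}$, and this balance succeeds solely because of the $2^{n}$ amplification coming from the eccentricity of $U_n = V_{n^2}$ (large measure deficit relative to its number of small coordinates). I expect the delicate verifications to be the geometric covering — choosing $T_n$ and $\epsilon_n$ so that the shadows $t+E_n$ tile $\TT^\omega$ while the bumps $t+A_n$ remain disjoint and $\mathrm{diam}(U_n)\to 0$ — together with the bound $\|F_n\|_1 \approx 2^{-n}$; once these are in place, the Chebyshev (or Borel--Cantelli) step closes the argument routinely. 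It is worth stressing that the translation-invariance of $\RRR$ is used in an essential way, which is consistent with the fact that the analogous statement fails for $\RRR_0$.
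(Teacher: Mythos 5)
Your proposal is correct and follows essentially the same route as the paper: the paper's proof also places $2^{n^2-n}$ translated copies of the Proposition~\ref{obs:1} bump on a dyadic grid in the first $n$ coordinates (so each scale contributes mass $2^{-n}$), sums over $n$, and separates the set where the averages along catching sets $h+U_n$ stay $\geq 1$ from the small set where $f$ is large. The only cosmetic difference is that the paper settles for near-coverage $m(E_n) > 1 - 2^{-n}$ and a limsup (Borel--Cantelli-type) argument rather than exact coverage by the shadows, which is precisely the fallback you mention at the end.
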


\noindent Indeed, given $n \in \NN$ we take $\epsilon_n \in (0, \frac{1}{2^{n+1}})$ and denote
$$
\PPP_n = \Big\{ \frac{1}{2^n}, \frac{3}{2^n}, \frac{5}{2^n}, \dots, \frac{2^n-1}{2^n}  \Big\}^n \subset (0,1)^n.
$$
For each $P = (P_1, \dots, P_n) \in \PPP_n$ we consider the sets
$$
A_P = (P_1 - \epsilon_n, P_1 + \epsilon_n) \times \dots \times (P_n - \epsilon_n, P_n + \epsilon_n) \times \TT^{n, \omega}
$$
and
$$
E_P = \Big(P_1 - \frac{1}{2^n} + \epsilon_n, P_1 + \frac{1}{2^n} - \epsilon_n\Big) \times \dots \times \Big(P_n - \frac{1}{2^n} + \epsilon_n, P_n + \frac{1}{2^n} - \epsilon_n\Big) \times \TT^{n, \omega}.
$$
Let us define $f_n \in L^1(\TT^\omega)$ by
$$
f_n = \sum_{P \in \PPP_n} \frac{2^{-n^2}}{(2\epsilon_n)^n} \chi_{A_P}.
$$
Since $\PPP_n$ consists of precisely $2^{n^2 - n}$ elements, we see that $\|f_n\|_1 = 2^{-n}$.

Fix $P \in \PPP_n$ and let $g \in E_P$. We can find $h \in \TT^\omega$ such that
$$
\{ g \} \cup A_P \subset h + U_n
$$ 
(here $U_n = V_{n^2}$ is defined as in Proposition~\ref{obs:1}) and hence
$$
(f_n)_{h + U_n} \geq \frac{2^{-n^2}}{m(U_n)} = 1.
$$
Denote $A_n = \bigcup_{P \in \PPP_n} A_P$ and $E_n = \bigcup_{P \in \PPP_n} E_P$. Observe that
\begin{equation}\label{eq:2}
m(A_n) = 2^{n^2 - n} (2 \epsilon_n)^n \leq 2^{-n}.
\end{equation}
Moreover, if $\epsilon_n$ is chosen to be sufficiently small, then
\begin{equation}\label{eq:3}
m(E_n) = 2^{n^2 - n} \Big(2 (2^{-n} - \epsilon_n)\Big)^n = (1 - 2^n \epsilon_n)^n > 1 - 2^{-n}.
\end{equation}

We now let
$$
f = \sum_{n \in \NN} f_n,
$$
where for each $n \in \NN$ the corresponding parameter $\epsilon_n$ is such that \eqref{eq:2} and \eqref{eq:3} hold. Note that $f \in L^1(\TT^\omega)$, since $\|f\|_1 = \sum_{n \in \NN} \|f_n\|_1 = 1$. Let
$$
A = \bigcup_{n \in \NN} A_n, \qquad E = \bigcap_{n \in \NN} \bigcup_{k \geq n} E_k.
$$
One can easily show that $m(A) \leq \frac{1}{2}$. Moreover, since $\bigcap_{k = n}^\infty E_k \subset E$ and $m(\bigcap_{k = n}^\infty E_k) \geq 1-2^{n-1}$ hold for each $n \in \NN$, we conclude that $m(E) = 1$. Note that for each $g \in \TT^\omega \setminus A$ we have $f(g) = 0$. On the other hand, for each $g \in E$ there exists a sequence $(h_n )_{n \in \NN} \subset \TT^\omega$ such that $g \in h_n + U_n$, $n \in \NN$, and
$$
\limsup_{n \in \NN} f_{h_n + U_n} \geq \limsup_{n \in \NN} (f_n)_{h_n + U_n} \geq 1.
$$
Consequently, for each $g \in E \setminus A$,
$$
\limsup_{n \in \NN} f_{h_n + U_n} > f(g). 
$$
Thus, we conclude that the Rubio de Francia basis $\RRR$ does not differentiate $L^1(\TT^\omega)$.


\medskip

Now, it remains to show that $\RRR$ does not differentiate $L^\infty(\TT)$. Let us remark that the crucial fact used to justify Propositions~\ref{obs:1}~and~\ref{obs:2} was that the basis $\RRR$ consists of intervals which can additionally be translated by an arbitrary element of the group. The exact shape of these intervals is of less importance here. It turns out that the same is true for the last result we are interested in. Namely, we will show that the answer to (Q3) is negative with $\RRR$ replaced by any collection $\BB$ satisfying the following assertions:
\begin{enumerate}[label=(B\arabic*)]
	\item each element of $\BB$ is an interval, 
	\item $\BB$ is translation-invariant, that is, if $B \in \BB$, then $\{ g + B : g \in \TT^\omega \} \subset \BB$,
	\item for any $\epsilon > 0$ there exists $B \in \BB$ such that ${\rm diam}(B) < \epsilon$. 
\end{enumerate}
Note that if (B1)--(B3) holds, then $\BB$ is indeed a differentiation basis.

Let us now formulate the main result of this section. 

\begin{theorem}\label{thm:1}
	Fix $\epsilon > 0$ and let $\BB$ be an arbitrary collection of sets in $\TT^\omega$ satisfying {\rm (B1)--(B3)}. Then there exist sets $A, E \subset \TT^\omega$ such that
	\begin{equation}\label{eq:10}
	m(A) < \epsilon \quad \textit{and} \quad  m(E) = 1,
	\end{equation}
	and for each $g \in E$ there exists a family $\{Q_n : n \in \NN \} \subset \BB$ such that $Q_n \Rightarrow g$ and 
	\begin{equation}\label{eq:11}
	\limsup_{n \rightarrow \infty} \, (\chi_A)_{Q_n} \geq e^{-8} > 0.
	\end{equation}
	In particular, $\BB$ does not differentiate $L^\infty(\TT^\omega)$.
\end{theorem}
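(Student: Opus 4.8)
The plan is to adapt the multi-scale scheme of Proposition~\ref{obs:2}, replacing the $L^1$-normalized functions $f_n$ by the indicator $\chi_A$ of a single set $A$ of small measure. First I would record the structural consequences of {\rm (B1)--(B3)}. By {\rm (B3)} I may fix a sequence $B_k \in \BB$ with ${\rm diam}(B_k) \to 0$. Since each $B_k = \prod_n I_n^{(k)}$ is an interval with $I_n^{(k)} = \TT$ for $n > N_k$, the definition of $\rho$ forces ${\rm diam}(B_k) \geq \sum_{n > N_k} 2^{-n-1} = 2^{-N_k-1}$, so $N_k \to \infty$; moreover for each fixed $n$ the circular length of $I_n^{(k)}$ is at most $2^n\,{\rm diam}(B_k) \to 0$, so every fixed coordinate is eventually arbitrarily thin. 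Thus {\rm (B3)} guarantees that $\BB$ contains intervals which are thin in an arbitrarily large number of coordinates, and by {\rm (B2)} every translate of each $B_k$ is available. As with Propositions~\ref{obs:1} and~\ref{obs:2}, this is the only information about $\BB$ that I would use.

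The essential new point is that, with the bounded function $\chi_A$ in place of $f_n$, a single scale is too weak. If at scale $k$ the mass is placed as a grid of boxes of half-width $\mu$ inside a box of width $w$ per active coordinate, then a translate $Q$ capturing one such box satisfies $(\chi_A)_Q \approx (2\mu/w)^{N_k}$ (a product over the active coordinates), whereas the corresponding good region, the set of points $g$ admitting a translate of $B_k$ that contains both $g$ and a full box, carries a product of factors of the form $1 - \mu/w$, exactly as in \eqref{eq:3}. Forcing the density above a fixed constant by thickening the boxes collapses the good region, while thinning the boxes to keep the good region large drives the density to $0$; hence no single scale can deliver, on a set of measure close to $1$, a density bounded below by a universal constant while $m(A) < \epsilon$. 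I would resolve this tension by a \emph{nested} construction: arrange the placements at all sufficiently fine scales so that a single contracting box $Q_n \ni g$ contains not only its scale-$n$ mass but also the mass of all finer scales nested inside it, using precisely the coordinates in which $Q_n$ is full to host the finer structure. Along the special contracting sequence produced for a good point the per-scale densities then accumulate, and the parameters $\mu, w$ are tuned so that the accumulated lower bound is the universal constant $e^{-8}$ while the individual measures $m(A_k)$ sum to less than $\epsilon$.

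With the scales in hand I would set $A = \bigcup_k A_k$ and, exactly as in Proposition~\ref{obs:2}, let $E = \bigcap_n \bigcup_{k \geq n} E_k$ be the set of points that are good at infinitely many scales, where $E_k$ denotes the scale-$k$ good region. Choosing the parameters so that $\sum_k \big(1 - m(E_k)\big) < \infty$ and $\sum_k m(A_k) < \epsilon$, the first estimate yields $m(E) = 1$ by the Borel--Cantelli argument already used in Proposition~\ref{obs:2}, and the second yields $m(A) < \epsilon$; this establishes \eqref{eq:10}. For each $g \in E$ I would extract, from the infinitely many scales at which $g$ is good, a family $\{Q_n : n \in \NN\} \subset \BB$ with $Q_n \Rightarrow g$ (diameters tend to $0$ because the active scales tend to $\infty$) and $(\chi_A)_{Q_n} \geq e^{-8}$, which is \eqref{eq:11}. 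Finally, since $m(A) < \epsilon$, on the set $E \setminus A$ of measure at least $1 - \epsilon$ we have $\chi_A(g) = 0$ while $\limsup_{n} (\chi_A)_{Q_n} \geq e^{-8} > 0$, so condition \eqref{cond:1} of Fact~\ref{fact:1} fails for $\chi_A \in L^\infty(\TT^\omega)$ on a set of positive measure; hence $\BB$ does not differentiate $L^\infty(\TT^\omega)$.

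The hard part will be the optimization inside the nested construction: producing a \emph{universal} lower bound $e^{-8}$ on the accumulated density that is independent of $\epsilon$, while the total mass stays below the arbitrarily small $\epsilon$ and the good regions still fill almost all of $\TT^\omega$ at infinitely many scales. This is precisely where the density-versus-good-region tension is balanced and where the constant $e^{-8}$ emerges from the relevant product and tail estimates; the remaining bookkeeping is a routine adaptation of Propositions~\ref{obs:1} and~\ref{obs:2}.
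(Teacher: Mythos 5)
Your outer scaffolding coincides with the paper's (a multi-scale set $A=\bigcup_k A_k$, the limsup set $E$, Borel--Cantelli, and the final deduction that \eqref{cond:1} fails for $\chi_A$ on $E\setminus A$), but the quantitative core of the theorem is missing, and the mechanism you propose in its place does not work. Your claim that ``no single scale can deliver \dots a density bounded below by a universal constant'' is an artifact of the only placement you analyze, namely the one from Propositions~\ref{obs:1} and~\ref{obs:2}, where the translate must engulf an entire tiny mass box. The paper's Lemmas~\ref{lem:1}--\ref{lem:3} achieve precisely what you declare impossible, at a \emph{single} scale: the mass boxes are taken with the \emph{same} widths $r_i$ as the basis element $Q$ in each of the $(L+1)^2$ active coordinates, arranged in a grid with relative gaps $\tfrac{1}{n}$ (where $n=L+1$); for a point $g$ in a cell one translates $Q$ by $r_i/n$ in exactly those coordinates where $g$ falls in a gap, each shift costing an overlap factor $\tfrac{n-1}{n}$, and Chebyshev's inequality for the binomial count (parameters $n^2$ and $\tfrac{1}{n+1}$) of gap coordinates shows that outside a subset of at most half of each cell there are fewer than $4n$ such coordinates. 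Hence the density exceeds $\big(\tfrac{n-1}{n}\big)^{4n}>e^{-8}$, while $m(A_{K,L})\leq\big(\tfrac{L+1}{L+2}\big)^{(L+1)^2}<e^{-L}$. The resulting good region has measure only $>\tfrac{1}{2e}$, not close to $1$; the paper then gets $m(E)=1$ not from $\sum_k\big(1-m(E_k)\big)<\infty$, which you require but never show how to arrange (indeed your own tension analysis says you cannot, within your framework), but by building different scales on \emph{disjoint blocks of coordinates}, so that the sets $E_k$ are independent events, and invoking the \emph{second} Borel--Cantelli lemma.

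The ``nested accumulation'' you offer as a substitute cannot supply the missing constant. If the finer-scale mass $A_k$, $k>n$, is hosted ``in the coordinates in which $Q_n$ is full,'' then $A_k$ and any translate $h+Q_n$ are cylinder sets over disjoint collections of coordinates, hence independent with respect to the product measure $m$, and so $(\chi_{A_k})_{h+Q_n}=m(A_k)$ \emph{exactly}. Consequently the total contribution of all finer scales to the density inside a fixed $Q_n$ is at most $\sum_{k>n}m(A_k)<\epsilon$, and no accumulation whatsoever takes place: you are thrown back on the single-scale density, i.e.\ precisely the quantity for which you never establish a lower bound. The sentence in which the parameters are ``tuned so that the accumulated lower bound is the universal constant $e^{-8}$'' is therefore not bookkeeping but the entire content of Theorem~\ref{thm:1}, and it is absent from the proposal.
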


\noindent The proof of Theorem~\ref{thm:1} will be preceded by several auxiliary lemmas. 
 
\begin{lemma}\label{lem:1}
Fix $n \in \NN$ and let $(\alpha_i)_{i=1}^{n^2}$ be a sequence of strictly positive numbers. Denote
$$
\II_n = (0, \alpha_1) \times \dots \times  (0, \alpha_{n^2}) \subset [0, \infty)^{n^2}
$$
and
$$
\JJ_n = \Big\{ (x_1, \dots, x_{n^2}) \in \Big(1+ \frac{1}{n}\Big) \II_n : \# \Big( \Big\{ x_i : x_i \in \Big[\alpha_i, \Big(1+\frac{1}{n}\Big) \alpha_i \Big)     \Big\} \Big) \geq 4 n \Big\},
$$
where $\Big(1+ \frac{1}{n}\Big) \II_n = \prod_{i=1}^{n^2} (0, (1+\frac{1}{n})\alpha_i)$ is the dilation of $\II_n$ with respect to the origin and $\#( \, \cdot \, )$ is the counting measure. Then
\begin{equation}\label{eq:4}
\frac{|\JJ_n |}{|(1+ \frac{1}{n}) \II_n| } < \frac{1}{2}.
\end{equation}	
\end{lemma}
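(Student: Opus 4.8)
The plan is to read the ratio in \eqref{eq:4} as a probability and then dispatch it with a first-moment (Markov) estimate. Equipping the box $(1+\frac1n)\II_n$ with normalized Lebesgue measure turns it into a product probability space on which the coordinates $x_1,\dots,x_{n^2}$ are independent, each $x_i$ being uniformly distributed on $(0,(1+\frac1n)\alpha_i)$. The decisive observation is that the event $x_i \in [\alpha_i,(1+\frac1n)\alpha_i)$ has probability
$$
\frac{(1+\frac1n)\alpha_i - \alpha_i}{(1+\frac1n)\alpha_i} = \frac{1}{n+1},
$$
with no dependence on the particular value of $\alpha_i$. This uniformity across coordinates is precisely what makes the sequence $(\alpha_i)$ irrelevant to the final bound.

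Next I would let $X$ denote the number of indices $i \in \{1,\dots,n^2\}$ for which $x_i$ lands in its annulus $[\alpha_i,(1+\frac1n)\alpha_i)$. By the independence above, $X$ is a sum of $n^2$ independent Bernoulli variables, each with success probability $\frac1{n+1}$; hence $X$ follows the binomial distribution with parameters $n^2$ and $\frac1{n+1}$, and in particular
$$
\EE[X] = \frac{n^2}{n+1} < n.
$$
Since membership in $\JJ_n$ is, by definition, the event $\{X \geq 4n\}$, the left-hand side of \eqref{eq:4} equals $\PP(X \geq 4n)$.

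Markov's inequality then finishes the proof in one line:
$$
\frac{|\JJ_n|}{|(1+\frac1n)\II_n|} = \PP(X \geq 4n) \leq \frac{\EE[X]}{4n} = \frac{n}{4(n+1)} < \frac14 < \frac12.
$$
I expect no genuine obstacle here: the sole idea is the probabilistic reformulation, after which the first moment alone already beats the required bound by a comfortable margin. The slack is harmless --- the threshold $4n$ sits well above the mean $\tfrac{n^2}{n+1}\approx n$, and one could even invoke a Chernoff-type bound for exponential decay --- so presumably the specific constant $4n$ is tuned not for this lemma but for its later role in the proof of Theorem~\ref{thm:1}, where the factor $e^{-8}$ appears.
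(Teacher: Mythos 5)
Your proof is correct and follows essentially the same route as the paper: both reinterpret the volume ratio $|\JJ_n|/|(1+\frac{1}{n})\II_n|$ as the tail probability $\PP(X \geq 4n)$ of a binomial random variable $X$ with parameters $n^2$ and $\frac{1}{n+1}$. The only difference is the closing estimate --- the paper applies Chebyshev's inequality (via $\mathrm{Var}(X) = \frac{n^3}{(n+1)^2}$), whereas you use Markov's inequality, which is even simpler and yields the slightly better bound $\frac{n}{4(n+1)} < \frac{1}{4}$.
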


\begin{proof}
	Observe that the quantity on the left hand side of \eqref{eq:4} is equal to the probability
$$
\PP(X \geq 4n),
$$
where $X$ 
is a binomially distributed random variable with parameters $n^2$ and $\frac{1}{n+1}$. We note that $\EE(X) = \frac{n^2}{n+1}$ and $\rm{Var}(X) = \frac{n^3}{(n+1)^2}$. By applying Chebyshev's inequality we get
$$
\PP(X \geq 4n) \leq \PP\Big( \Big|X - \frac{n^2}{n+1} \Big| \geq  \frac{3 n^3}{(n+1)^2} \Big) \leq \frac{(n+1)^2}{9 n^3} < \frac{1}{2}
$$
and hence \eqref{eq:4} is satisfied.
\end{proof}

\begin{lemma}\label{lem:2}
	For fixed $n \in \NN \setminus \{1\}$ let $(\alpha_i)_{i=1}^{n^2}$, $\II_n$, and $\JJ_n$ be as in Lemma~\ref{lem:1}. 
Then for each $x = (x_1, \dots, x_{n^2}) \in (1+\frac{1}{n})\II_n \setminus \JJ_n$ there exists $y = (y_1, \dots, y_{n^2}) \in \RR^{n^2}$ such that
\begin{equation}\label{eq:5}
x \in y + \II_n \quad \textit{and} \quad |(\II_n + y) \cap \II_n | > e^{-8} |\II_n|. 
\end{equation}
\end{lemma}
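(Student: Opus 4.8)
The plan is to construct the translation vector $y$ coordinate by coordinate, using the fact that both $|(\II_n + y) \cap \II_n|$ and $|\II_n|$ factor as products over the $n^2$ coordinates, so that the overlap ratio becomes a product of one-dimensional ratios. Let $S = \{ i : \alpha_i \le x_i < (1 + \frac{1}{n})\alpha_i \}$ denote the set of coordinates of $x$ that fall in the outer ``shell''. Since $x \in (1+\frac1n)\II_n \setminus \JJ_n$, the counting condition $\#(\{ x_i : x_i \in [\alpha_i, (1+\frac1n)\alpha_i) \}) \ge 4n$ defining $\JJ_n$ fails, so $\#S \le 4n - 1$; equivalently, for every $i \notin S$ we have $x_i \in (0, \alpha_i)$.

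First I would handle the two types of coordinates separately. For $i \notin S$ I set $y_i = 0$, so that the $i$-th edge of $y + \II_n$ is exactly $(0, \alpha_i) \ni x_i$ and contributes the ratio $1$ to the overlap. For $i \in S$ we have $x_i - \alpha_i \in [0, \frac{1}{n}\alpha_i)$, and I would choose any $y_i$ in the interval $(x_i - \alpha_i, \frac{1}{n}\alpha_i)$, which is nonempty exactly because $x_i - \alpha_i < \frac{1}{n}\alpha_i$ and consists of nonnegative numbers. Such a $y_i$ satisfies $y_i < x_i < y_i + \alpha_i$ (using $y_i < \frac1n\alpha_i < \alpha_i \le x_i$), so $x_i$ lies in the $i$-th edge of $y + \II_n$, while the overlap of $(y_i, y_i + \alpha_i)$ with $(0, \alpha_i)$ has length $\alpha_i - y_i$, giving a one-dimensional ratio strictly larger than $1 - \frac{1}{n}$. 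Assembling all coordinates gives $x \in y + \II_n$, which is the first assertion of \eqref{eq:5}.

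For the second assertion I would simply multiply the one-dimensional ratios: since the non-shell coordinates contribute $1$ and each of the at most $4n-1$ shell coordinates contributes more than $1 - \frac1n$, the overlap ratio exceeds $(1 - \frac1n)^{4n - 1} > (1 - \frac1n)^{4n}$. The last quantity is at least $e^{-8}$ by the elementary estimate $\log(1 - \frac1n) \ge -\frac2n$, which holds precisely for $n \ge 2$ and is exactly where the hypothesis $n \in \NN \setminus \{1\}$ enters. I expect the only delicate step to be the treatment of the shell coordinates: one must check that the admissible window $(x_i - \alpha_i, \frac1n\alpha_i)$ for $y_i$ is nonempty and nonnegative, so that the overlap length is genuinely $\alpha_i - y_i$ and the containment $x_i \in (y_i, y_i + \alpha_i)$ is strict. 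Everything else reduces to the bookkeeping of the product and the matching of the constant $e^{-8}$.
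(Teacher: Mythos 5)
Your proof is correct and follows essentially the same route as the paper's: both constructions set $y_i = 0$ on the coordinates with $x_i \in (0,\alpha_i)$, shift by an amount less than or equal to $\frac{\alpha_i}{n}$ on the at most $4n-1$ shell coordinates, exploit the product structure of the overlap, and finish with an elementary estimate valid exactly for $n \geq 2$. The only cosmetic differences are that the paper fixes $y_i = \frac{\alpha_i}{n}$ on shell coordinates (rather than choosing $y_i \in (x_i - \alpha_i, \frac{\alpha_i}{n})$) and closes with $\big(\tfrac{n-1}{n}\big)^{4n} \geq \big(\tfrac{n-1}{n}\big)^{8(n-1)} > e^{-8}$ instead of your bound $\log\big(1-\tfrac{1}{n}\big) \geq -\tfrac{2}{n}$.
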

\begin{proof}
	Let $x \in (1+\frac{1}{n})\II_n \setminus \JJ_n$. We define $y \in \RR^{n^2}$ by letting
$$
y_i = \left\{ \begin{array}{rl}
0 & \textrm{if }  x_i \in (0, \alpha_i),   \\
\frac{\alpha_i}{n} & \textrm{otherwise,} \end{array} \right.
$$
for each $i \in \{ 1, \dots, n^2\}$. Obviously, $x \in y + \II_n$. Moreover, the ratio $$
\frac{|(0, \alpha_i) \cap (y_i, \alpha_i+y_i) |}{|(0, \alpha_i)|}
$$
equals $1$ if $y_i = 0$ or $\frac{n-1}{n}$ if $y_i = \frac{\alpha_i}{n}$. Consequently, we have
$$
\frac{|(\II_n + y) \cap \II_n |}{|\II_n|} = \Big( \frac{n-1}{n} \Big)^{\#\{ x_i : x_i \in [\alpha_i, (1+1/n) \alpha_i) \}} > \Big( \frac{n-1}{n} \Big)^{4n} \geq \Big( \frac{n-1}{n} \Big)^{8(n-1)} > e^{-8}
$$
and hence \eqref{eq:5} is satisfied. 
\end{proof}

\begin{lemma}\label{lem:3}
Fix $K, L \in \NN$ and let $\BB$ be an arbitrary collection of sets in $\TT^\omega$ satisfying {\rm (B1)--(B3)}. Then there exist sets $A_{K,L}$ and $E_{K,L}$ of the form
\begin{equation}\label{eq:6}
A_{K,L} = \TT^K \times A_{K,L}^\circ \times \TT^{K + (L+1)^2, \omega} \subset \TT^\omega
\end{equation} 
and 
\begin{equation}\label{eq:7}
E_{K,L} = \TT^K \times E_{K,L}^\circ \times \TT^{K + (L+1)^2, \omega}  \subset \TT^\omega,
\end{equation}
where $  A_{K,L}^\circ, E_{K,L}^\circ \subset [0,1)^{(L+1)^2}$, such that
\begin{equation}\label{eq:8}
m(A_{K,L}) < e^{-L} \quad \textit{and} \quad m(E_{K,L}) > \frac{1}{2e},
\end{equation}
and
\begin{equation}\label{eq:9}
\MB_{2^{-L}} \chi_{A_{K,L}}(g) > e^{-8}, \qquad g \in E_{K,L}.
\end{equation}
\end{lemma}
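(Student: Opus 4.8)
The plan is to build $A_{K,L}$ as a sparse grid of tiny translates of a single fixed thin box drawn from $\BB$, and to let $E_{K,L}$ be the set of points that can \emph{see} one of these tiny boxes with density above $e^{-8}$; Lemmas~\ref{lem:1} and~\ref{lem:2}, applied with $n=L+1$ so that $n^2=(L+1)^2$, will supply exactly this visibility together with the measure bookkeeping. First I would fix $n=L+1$ and invoke (B3) to choose $B^\ast\in\BB$ with ${\rm diam}(B^\ast)<2^{-L}$ so small that, in each of the coordinates $1,\dots,K+(L+1)^2$, the arc of $B^\ast$ is a short proper sub-arc of $\TT$; this is legitimate because the contribution of the $m$-th coordinate to ${\rm diam}(B^\ast)$ is the $\TT$-diameter of its arc divided by $2^m$, so a small enough diameter forces shortness in every fixed coordinate. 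Writing $\alpha_i$ for the length of the arc of $B^\ast$ in coordinate $K+i$, $i=1,\dots,n^2$, I obtain small positive numbers $(\alpha_i)$; translating $B^\ast$ by (B2) if necessary, I may assume these arcs are the intervals $(0,\alpha_i)$, so that the block part of $B^\ast$ is $\II_n=\prod_{i=1}^{n^2}(0,\alpha_i)$, and I feed $(\alpha_i)$ into Lemmas~\ref{lem:1} and~\ref{lem:2} to form $\II_n$ and $\JJ_n\subset(1+\tfrac1n)\II_n$.

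Next I would tile the block. Identifying coordinates $K+1,\dots,K+(L+1)^2$ with $[0,1)^{n^2}$, I keep the $\prod_{i=1}^{n^2}M_i$ cells, with $M_i=\lfloor((1+\tfrac1n)\alpha_i)^{-1}\rfloor$, that are disjoint translates of $(1+\tfrac1n)\II_n$ fitting inside $[0,1)^{n^2}$, and in the corner of each cell I place one copy of $\II_n$. Let $A^\circ$ be the union of these corner copies and $E^\circ$ the union over cells of the translated good sets $(1+\tfrac1n)\II_n\setminus\JJ_n$, and define $A_{K,L},E_{K,L}$ by \eqref{eq:6} and \eqref{eq:7}. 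Since the corner copies (resp.\ good sets) in distinct cells are disjoint, $m(A_{K,L})=\prod_i(M_i\alpha_i)\le(1+\tfrac1n)^{-n^2}$, whereas Lemma~\ref{lem:1} gives $m(E_{K,L})>\tfrac12(1+\tfrac1n)^{n^2}\prod_i(M_i\alpha_i)\ge\tfrac12\prod_{i=1}^{n^2}\big(1-(1+\tfrac1n)\alpha_i\big)$. Using the elementary estimate $(1+\tfrac1n)^{n^2}\ge e^{L+1/2}$ (from $\ln(1+x)\ge x-\tfrac{x^2}{2}$) and choosing $\epsilon$, hence the $\alpha_i$, small enough that $\prod_i(1-(1+\tfrac1n)\alpha_i)>\tfrac1e$, I obtain $m(A_{K,L})\le e^{-(L+1/2)}<e^{-L}$ and $m(E_{K,L})>\tfrac1{2e}$, which is \eqref{eq:8}.

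Finally I would verify \eqref{eq:9}. Fix $g\in E_{K,L}$ and let $c$ be the corner of the cell whose translated good set contains the block coordinates of $g$; then $x:=(\text{block part of }g)-c\in(1+\tfrac1n)\II_n\setminus\JJ_n$, so Lemma~\ref{lem:2} furnishes $y$, with $y_i\in\{0,\alpha_i/n\}$, such that $x\in y+\II_n$ and $|(\II_n+y)\cap\II_n|>e^{-8}|\II_n|$. By (B2) I translate $B^\ast$ to a box $B_g\in\BB$ with block part equal to $c+y+\II_n$ and with its remaining coordinates shifted so as to contain the corresponding coordinates of $g$, which is possible because a group translation acts coordinatewise. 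Then $g\in\overline{B_g}$, i.e.\ $B_g\in\BB(g)$, and ${\rm diam}(B_g)={\rm diam}(B^\ast)<2^{-L}$. Since $A_{K,L}$ is full in every coordinate outside the block and the block part $c+y+\II_n$ of $B_g$ lies inside its own cell (because $y_i\le\alpha_i/n$), it meets only the single corner copy $c+\II_n$, so $m(B_g\cap A_{K,L})/m(B_g)=|(\II_n+y)\cap\II_n|/|\II_n|>e^{-8}$. Hence $\MB_{2^{-L}}\chi_{A_{K,L}}(g)>e^{-8}$, as required.

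The step I expect to be the main obstacle is the first one: manufacturing, out of the bare hypotheses (B1)--(B3), a box of $\BB$ that is genuinely thin in the prescribed block with side-lengths one can make arbitrarily small. The only lever available is that a small $\rho$-diameter forces shortness in each fixed coordinate; one must check that this is compatible with the box being simultaneously thin in coordinates $1,\dots,K$ (harmless, since $A_{K,L}$ is full there and those coordinates contribute a factor $1$ to every density ratio) and that the resulting $\alpha_i$ are small enough for the non-exact tiling to lose only a negligible fraction of measure, so that the constant $\tfrac1{2e}$ survives. Once such a box is in hand, everything else is the bookkeeping already isolated in Lemmas~\ref{lem:1} and~\ref{lem:2}.
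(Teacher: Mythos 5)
Your proposal is correct and follows essentially the same route as the paper's proof: use (B2)--(B3) to extract a thin interval from $\BB$ anchored at the origin, tile the block of coordinates $K+1,\dots,K+(L+1)^2$ by disjoint translates of its $(1+\tfrac{1}{L+1})$-dilation, take $A_{K,L}$ to be the union of the corner copies and $E_{K,L}$ the union of the good sets, and then apply Lemmas~\ref{lem:1} and~\ref{lem:2} (with $n=L+1$) for the measure bounds and the density estimate \eqref{eq:9}. The only differences are cosmetic bookkeeping choices (e.g.\ your $e^{L+1/2}$ bound versus the paper's $(\frac{L+1}{L+2})^{(L+2)L}$ comparison, and deferring the quantitative diameter bound rather than fixing it up front), and the step you flagged as the likely obstacle is handled in the paper exactly as you suggest.
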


\begin{proof}
By (B1)--(B3) we can find an interval $Q \in \BB$ of the form
$$
Q = I_1 \times \dots \times I_k \times \TT^{k, \omega} 
$$	
for some $k \in \NN$, satisfying
\begin{equation}\label{diamQ}
{\rm diam}(Q) \leq \frac{1}{2^{K+(L+1)^2}((L+1)^2+1)} \cdot \frac{L+1}{L+2}
\end{equation}
and such that $I_i \in \{(0, r_i), (0, r_i], [0, r_i), [0, r_i]\}$,  $r_i \in (0,1]$, for each $i \in \{1, \dots, k\}$. Note that $k \geq K+(L+1)^2$, since ${\rm diam}(Q) \geq \frac{1}{2} \cdot 2^{-(k+1)}$. Moreover, given $i \leq K+(L+1)^2$, we deduce from \eqref{diamQ} that $r_i$ satisfies 
\begin{equation}\label{condQ}
(1 + \frac{1}{L+1}) r_i \leq \frac{1}{(L+1)^2 + 1}.
\end{equation}

Now for each $i \in \{K+1, \dots, K + (L+1)^2 \}$ consider the set
$$
\PPP_i = \Big\{ j \cdot \Big(1+\frac{1}{L+1} \Big)r_i : j = 0, \dots, l_i -1 \Big\} \subset [0, 1),
$$
where $l_i \in \NN$ is such that
$$
l_i \cdot \Big(1+\frac{1}{L+1}\Big)r_i \leq 1 < (l_i+1) \cdot \Big(1+\frac{1}{L+1}\Big)r_i.
$$
Next, denote
$$
\PPP^\circ = \PPP_{K+1} \times \dots \times \PPP_{K + (L+1)^2} \subset [0,1)^{(L+1)^2}. 
$$
We define $A_{K,L}$ by taking
$$
A_{K,L}^\circ = \bigcup_{p \in \PPP^\circ} p + \II^\circ
$$ 	
in \eqref{eq:6}, where $\II^\circ$ is the set $\II_n$ introduced in Lemma~\ref{lem:1} for $n = L+1$ and $\alpha_i = r_{K+i}$, $i \in \{1, \dots, (L+1)^2 \}$. Similarly, we define $E_{K,L}$ by taking
$$
E_{K,L}^\circ = \bigcup_{p \in \PPP^\circ} p + \Big(\Big(1+\frac{1}{n}\Big)\II^\circ \setminus \JJ^\circ \Big)
$$ 	
in \eqref{eq:7}, where $\JJ^\circ$ is the set $\JJ_n$ introduced in Lemma~\ref{lem:1} for the same parameters as before. We shall prove that \eqref{eq:8} and \eqref{eq:9} hold for this choice of $A_{K,L}$ and $E_{K,L}$.

First, let us observe that \eqref{condQ} implies
$$
m(A_{K,L}) \leq \Big(\frac{L+1}{L+2} \Big)^{(L+1)^2} < \Big(\frac{L+1}{L+2} \Big)^{(L+2)L} < e^{-L}.
$$
Moreover, since the sets $ p + \Big(1+\frac{1}{L+1}\Big)\II^\circ  $, $p \in \PPP^\circ$, are disjoint, by Lemma~\ref{lem:1}~and~\eqref{condQ}
\begin{align*}
m(E_{K,L}) &= \Big|  \bigcup_{p \in \PPP^\circ}  p + \Big(\Big(1+\frac{1}{L+1}\Big)\II^\circ \setminus \JJ^\circ \Big)  \Big| > 
\frac{1}{2} \Big|  \bigcup_{p \in \PPP^\circ}  p + \Big(1+\frac{1}{L+1}\Big)\II^\circ   \Big| \\
&\geq \frac{1}{2} \prod_{i = K+1}^{K + (L+1)^2 } \Big( 1 - \Big(1 + \frac{1}{L+1}\Big) r_{i} \Big) \geq \frac{1}{2} \Big( 1 - \frac{1}{(L+1)^2 + 1} \Big)^{(L+1)^2} > \frac{1}{2e}
\end{align*}
and thus \eqref{eq:8} is satisfied.

Let us now fix $g = (g_1, g_2, \dots ) \in E_{K,L}$. Then
$$
x = \big(g_{K+1}, \dots, g_{K+(L+1)^2}\big) \in p + \Big(\Big(1+\frac{1}{n}\Big)\II^\circ \setminus \JJ^\circ \Big)
$$
for some $p \in \PPP^\circ$. By Lemma~\ref{lem:2} there exists $y \in \RR^{(L+1)^2}$ such that $x \in y + p + \II^\circ$ and
$$
|(y+p+\II^\circ) \cap A_{K,L}^\circ | \geq |(y+p+\II^\circ) \cap (p + \II^\circ) | > e^{-8} |\II^\circ|.
$$ 
Consequently, there exists $h \in \TT^\omega$ satisfying $g \in h + Q$ and $(\chi_{A_{K,L}})_{h + Q} > e^{-8}$. Finally, by \eqref{diamQ} we have ${\rm diam} (h + Q) < 2^{-L}$, which justifies \eqref{eq:9}.
\end{proof}

We are now ready to prove Theorem~\ref{thm:1}.

\begin{proof}[Proof of Theorem~\ref{thm:1}]
Let us put
$$
A = \bigcup_{n \in \NN} A_n \quad {\rm and} \quad E = \bigcap_{n\in \NN} \bigcup_{k \geq n} E_k,  
$$
where the pairs $\{A_n, E_n\}$, $n \in \NN$, are constructed inductively in the following way. First, take $K_1 = 1$ and $L_1 \in \NN$ such that $e^{-L_1} \leq \frac{\epsilon}{2}$. We let $A_1$ and $E_1$ be the sets $A_{K,L}$ and $E_{K,L}$ from Lemma \ref{lem:3}, respectively, for $K = K_1$ and $L = L_1$. In the second step, let us assume that given $n \in \NN$ we have already chosen $K_i$, $L_i$, $A_i$, and $E_i$ for each $i \in \{1, \dots, n\}$. Then we take $K_{n+1}, L_{n+1} \in \NN$ satisfying $K_{n+1} > K_n + (L_n + 1)^2$ and $e^{-L_{n+1}} \leq \frac{\epsilon}{2^{n+1}}$. Finally, we let $A_{n+1}$ and $E_{n+1}$ to be the sets $A_{K,L}$ and $E_{K,L}$ from Lemma \ref{lem:3}, respectively, for $K = K_{n+1}$ and $L = L_{n+1}$. We shall prove that \eqref{eq:10} and \eqref{eq:11} hold for this choice of $A$ and $E$. 

First, it is easy to see that by \eqref{eq:8} we have
$$
m(A) \leq \sum_{n\in \NN} m(A_n) < \sum_{n \in \NN} e^{-L_n} \leq \sum_{n \in \NN} \frac{\epsilon}{2^n} = \epsilon.  
$$
Next, notice that $m(E_n) > \frac{1}{2e}$ for each $n \in \NN$. Moreover, observe that the sets $E_n$, $n \in \NN$, are independent in the sense that for each $k \in \NN$ and pairwise different indices $n_1, \dots, n_k \in \NN$ we have
$$
m\Big(\bigcap_{i=1}^k E_{n_i}\Big) = \prod_{i=1}^k m(E_{n_i}). 
$$
Indeed, the above equality follows from \eqref{eq:7} and the fact that $K_{n+1} > K_n + (L_n + 1)^2$ holds for each $n \in \NN$. By applying the second Borel--Cantelli lemma we conclude that $m(E) = 1$ and therefore \eqref{eq:10} is satisfied.

Now, let us take $g \in E$. There exists a strictly increasing sequence $(k_n)_{n \in \NN}$ satisfying $g \in E_{k_n}$ for each $n \in \NN$. In view of \eqref{eq:9} we conclude that for each $n \in \NN$ there exists $g \in Q_n \in \BB$ such that $(\chi_A)_{Q_n} > e^{-8}$ and ${\rm diam}(Q_n) < 2^{-L_{k_n}}$. In particular, we obtain that \eqref{eq:11} holds. Finally, since $\lim_{n \rightarrow \infty} L_{k_n} = \infty$, we see that $Q_n \Rightarrow g$. Consequently, $\{Q_n : n \in \NN \}$ is as desired. 
\end{proof} 

\section{Differentiation on $\TT^\omega$}

In Section 3 we provided answers to (Q2) and (Q3), the questions formulated at the and of Section 2. Now we will take a closer look at the issue that naturally arises from (Q1). Namely, given a differentiation basis $\BB$ we would like to evaluate whether $\BB$ differentiates $L^1(\TT^\omega)$ by looking at the properties of $\MB$. 

Let us recall Fact \ref{fact:1} which says that if $\MB$ is of weak type $(1,1)$, then $\BB$ differentiates $L^1(\TT^\omega)$. Our first result shows that the opposite implication cannot be expected to be true in general.

\begin{proposition}\label{obs:3}
There exists a basis $\DD$ such that the following assertions are satisfied:
\begin{enumerate}[label=\rm(\roman*)]
	\item $\DD$ differentiates $L^1(\TT^\omega)$,
	\item $\MD$ is not of weak type $(1,1)$. 
\end{enumerate}
\end{proposition}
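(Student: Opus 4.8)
The plan is to build $\DD$ as the union of a ``good'' small-scale part and a ``bad'' large-scale part, exploiting the fact that the differentiation property \eqref{cond:1} depends only on the behavior of $\DD$ at arbitrarily small diameters, whereas $\MD$ ``sees'' all sets regardless of their size. Concretely, I would set $\DD = \RRR_0 \cup \GG$, where $\RRR_0$ is the restricted Rubio de Francia basis (already known to differentiate $L^1(\TT^\omega)$ with $\MMRO$ of weak type $(1,1)$, by \cite[Theorem~9]{FR2}) and $\GG$ is a translation-invariant family of large but thin sets designed to destroy the weak-type bound. For $\GG$ I would keep the very construction behind Proposition~\ref{obs:1}, but force a fixed lower bound on diameters by leaving one coordinate untouched: for $n \in \NN$ put
$$
U_n' = \TT \times \Big(0, \tfrac{1}{2^n}\Big)^{n} \times \TT^{n+1,\omega}, \qquad \GG = \{ h + U_n' : n \in \NN,\ h \in \TT^\omega \}.
$$
Since the first coordinate is all of $\TT$, every set in $\GG$ has diameter at least $\tfrac14$, while $m(U_n') = 2^{-n^2}$ as before.

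The first and conceptually central step is an auxiliary lemma: \emph{if $\MB_{r_0}$ is of weak type $(1,1)$ for some $r_0 > 0$, then $\BB$ differentiates $L^1(\TT^\omega)$}. I would prove this by the classical density argument. For a continuous $\phi$ and any family $S_n \Rightarrow g$ one has $|\phi_{S_n} - \phi(g)| \le \sup_{s \in S_n}|\phi(s) - \phi(g)|$, which tends to $0$ because $g \in \overline{S_n}$ forces every point of $S_n$ to lie within ${\rm diam}(S_n) \to 0$ of $g$ while $\phi$ is uniformly continuous; thus continuous functions are differentiated by every basis. For general $f \in L^1$, writing $f = \phi + (f-\phi)$ and using that ${\rm diam}(S_n) < r_0$ eventually, the oscillation $\limsup_n |f_{S_n} - f(g)|$ is bounded by $\MB_{r_0}(f-\phi)(g) + |(f-\phi)(g)|$. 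Both terms are controlled in measure by $\|f - \phi\|_1$ (the former through the assumed weak-type inequality, the latter through Chebyshev), and letting $\phi$ approximate $f$ in $L^1$ shows the oscillation vanishes a.e., which is exactly \eqref{cond:1}.

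With this lemma in hand, part (i) follows quickly. Fixing $r_0 = \tfrac14$, every set of $\GG$ is excluded from the truncation, so $\MD_{1/4} = \mathcal{M}^{\RRR_0}_{1/4} \le \MMRO$, which is of weak type $(1,1)$; hence $\DD$ differentiates $L^1(\TT^\omega)$. (I would also note in passing that $\DD$ is a genuine differentiation basis, the required contracting families being inherited from $\RRR_0$.) For part (ii), since $\DD \supset \GG$ we have $\MD \ge \MG$, so it suffices to see that $\MG$ is not of weak type $(1,1)$; this is a near-verbatim repetition of the computation in Proposition~\ref{obs:1}, now with the test functions $f_n = m(A_n)^{-1}\chi_{A_n}$, where $A_n = \TT \times (\tfrac12 - \epsilon_n, \tfrac12 + \epsilon_n)^{n} \times \TT^{n+1,\omega}$, and with $U_n'$ in place of $U_n$. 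For each $g$ whose coordinates $2,\dots,n+1$ lie in $(\tfrac12 - \tfrac{1}{2^n} + \epsilon_n, \tfrac12 + \tfrac{1}{2^n} - \epsilon_n)$ one finds a translate $h + U_n' \ni g$ containing $A_n$ (the first, full, coordinate imposing no constraint), whence $\MG f_n(g) \ge 2^{n^2}$; the set of such $g$ has measure $2^{n-n^2}(1 - 2^n \epsilon_n)^n$, and the resulting ratio grows like $2^{n-1}$, exactly as in \eqref{eq:1}.

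The main obstacle I anticipate is the auxiliary lemma rather than the construction: one must argue carefully that the weak-type hypothesis on the \emph{truncated} operator is genuinely sufficient, which hinges on the observation that a contracting family eventually consists of sets of diameter below any fixed $r_0$, so that $\MB_{r_0}$ controls the tail oscillation even though $\MB$ itself may be unbounded. Everything else is a controlled adaptation of material already developed in Section~3.
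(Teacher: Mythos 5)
Your proof is correct, but it takes a genuinely different route from the paper's. The paper sets $\DD = \RRR_0 \cup \DD_0$, where $\DD_0 = \{ U_{n,i} : n \in \NN, \ i \in \{1,\dots,n\} \}$ with $U_{n,i} = U_n \cup (e_{n,i} + U_n)$ is a countable, non-translation-invariant family all of whose members contain ${\bf 0}$ in their closures. Part (i) then follows from a purely geometric observation: if $g \neq {\bf 0}$ and a set's closure contains both $g$ and ${\bf 0}$, then its diameter is at least $\rho(g,{\bf 0})$, so any family contracting to $g$ eventually lies in $\RRR_0$; no maximal-function lemma is needed. Part (ii) follows by testing $\MDO$ on $f_n = \chi_{U_n}$, which yields a weak-type ratio of at least $(n+1)/3$. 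You instead make the bad family translation-invariant but impose a uniform diameter lower bound $\frac{1}{4}$ by filling the first coordinate, and you route part (i) through the lemma that weak type $(1,1)$ of a truncated operator $\MB_{r_0}$ already implies differentiation of $L^1(\TT^\omega)$. That lemma is true, your density-argument sketch of it is sound, and the verifications $\MD_{1/4} \leq \MMRO$ and the Proposition~\ref{obs:1}-style computation for (ii) are correct. Two points of comparison. First, for your particular $\DD$ the lemma is heavier machinery than needed: since every set of your family $\GG$ has diameter at least $\frac{1}{4}$, a contracting family can contain only finitely many members of $\GG$, so the paper's two-line argument (the tail of any contracting family lies in $\RRR_0$, which differentiates $L^1$) gives (i) directly. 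Second, the paper's example is stronger in an instructive way: its bad sets $U_{n,i}$ have diameters tending to $0$, so the same computation shows that \emph{every} truncation $\MD_{r_0}$, $r_0 > 0$, also fails to be of weak type $(1,1)$ while differentiation nevertheless holds; your example cannot exhibit this phenomenon, because it is engineered precisely so that $\MD_{1/4}$ is bounded. On the other hand, your truncated-operator lemma is a reusable quantitative tool --- it is essentially a special case of the sufficiency half of the paper's Theorem~4.3' --- so your approach anticipates the more refined analysis of Section~4, whereas the paper's Proposition~\ref{obs:3} is self-contained and ad hoc.
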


\noindent Indeed, for each $n \in \NN$ let $U_n = V_{n^2} = (0,2^{-n})^n \times \TT^{n,\omega}$. We denote
$$
U_{n,i} = U_n \cup (e_{n,i} + U_n), \qquad n \in \NN, \ i \in \{1, \dots, n\},
$$
where $e_{n,i} \in \TT^\omega$ has all coordinates zero except the $i$th which is $2^{-n}$ (here one could also replace $U_{n,i}$ with ${\rm int} (\overline{U_{n,i}})$, which is open and connected). We let
$$
\DD = \RRR_0 \cup \DD_0,
$$
where $\RRR_0$ is the restricted Rubio de Francia basis, while
$$
\DD_0 = \{ U_{n,i} :  n \in \NN, \ i \in \{1, \dots, n\} \}.
$$
We shall show that (i) and (ii) are satisfied for this choice of $\DD$. 

Let $f \in L^1(\TT^\omega)$. Recall that the basis $\RRR_0$ differentiates  $L^1(\TT^\omega)$. Thus, there exists a set $L^f \subset \TT^\omega$ such that $m(L_f) = 1$ and 
$$
\forall_{g \in L_f} \forall_{R_n \Rightarrow g} \big(\lim_{n \rightarrow \infty} f_{R_n} = f(g) \big).
$$
Let us now fix $g \in L_f \setminus \{ \bf 0\}$ and consider a family $\{D_n : n \in \NN\}$ which contracts to $g$. Observe that the closure of each element of $\DD_0$ contains $\bf 0$. This fact implies that there exists $N \in \NN$ such that for each $n \geq N$ we have $D_n \notin \DD_0$ and, consequently, $D_n \in \RRR_0$. Therefore, $\lim_{n \in \NN} f_{D_n} = f(g)$ holds. Finally, since $f \in L^1(\TT^\omega)$ was arbitrary and $m(L_f \setminus \{ {\bf 0} \}) = 1$, we conclude that $\DD$ differentiates $L^1(\TT^\omega)$. 

Now we show that $\MDO$ (and hence $\MD$) is not of weak type $(1,1)$. Given $n \in \NN$ we take $f_n = \chi_{U_n} \in L^1(\TT^\omega)$. Observe that for each $g \in U_{n,i} \setminus U_n$, $i \in \{1, \dots, n\}$, we have
$$
\MDO f_n(g) \geq (f_n)_{U_{n,i}} = \frac{m(U_n)}{m(U_{n,i})} = \frac{1}{2}.
$$ 
Consequently,
$$
\frac{1}{3} \, m \Big( \Big\{ g \in \TT^\omega : \MDO f_n(g) > \frac{1}{3} \Big\} \Big) \geq \frac{1}{3} m\Big(\bigcup_{i=1}^n U_{n,i}\Big) = \frac{n+1}{3} m(U_n) = \frac{n+1}{3} \|f_n\|_1. 
$$
Therefore, since $n \in \NN$ was arbitrary, we conclude that $\MDO$ is not of weak type $(1,1)$.
\medskip

The idea behind the construction of $\DD$ is that at any point $g \neq {\bf 0}$ the operator $\MD$ behaves locally like an operator with good mapping properties. The behavior near ${\bf 0}$ makes $\MD$ not of weak type $(1,1)$, while the problem of differentiating remains unaffected. Motivated by this remark, in the context of an arbitrary basis $\BB$ we introduce quantities that allow us to measure the sizes of the sets on which $\MB$ behaves badly.  

For a given basis $\BB$ and each $k \in \NN$ we denote $\delta_k^\BB = \sup_{E \in \EEE_k} m(E)$, where
\begin{equation*}\label{delta}
\EEE_k = \Big\{ E \subset \TT^\omega : \exists_{f \in L^1(\TT^\omega)} \  \exists_{\lambda > 0} \ \big( \forall_{x \in E} \ \MB f(x) > \lambda \big) \wedge \big( \lambda  m(E) > 2^k \|f\|_1 \big) \Big\}.
\end{equation*}
Observe that if $\MB$ is of weak type $(1,1)$, then there exists $k_0$ such that $\delta_k^\BB = 0$ for each $k \geq k_0$. In fact, it will be proved later on that the condition $\lim_{k \rightarrow \infty} \delta_k^\BB = 0$ is enough to ensure that $\BB$ differentiates $L^1(\TT^\omega)$. At the first glance, one would even expect that this condition is sufficient and necessary at the same time. Unfortunately, this is not true, as the example below shows. 

\begin{proposition}\label{obs:4}
There exists a basis $\GG$ such that the following assertions are satisfied:
\begin{enumerate}[label=\rm(\roman*')]
	\item $\GG$ differentiates $L^1(\TT^\omega)$,
	\item $\delta_k^\GG = 1$ for each $k \in \NN$. 
\end{enumerate}	
\end{proposition}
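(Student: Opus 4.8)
The plan is to imitate the construction behind Proposition~\ref{obs:3}: start from a basis that already differentiates $L^1(\TT^\omega)$, namely $\RRR_0$, and adjoin a family $\GG_0$ of ``bad'' sets that spoils the maximal operator. In Proposition~\ref{obs:3} the bad sets all clustered at the single point ${\bf 0}$, so they influenced $\MD$ only on a set of small measure and $\delta_k^\DD$ stayed small. Here the point is the opposite: we must arrange the bad sets so that, for every $k$, the maximal operator is large on a set of measure arbitrarily close to $1$. The difficulty is that spreading the bad behaviour over almost the whole space seems to threaten differentiation. I would resolve this tension by insisting that every set in $\GG_0$ have diameter bounded below by a fixed constant; such sets can then never occur in a contracting sequence, so they are invisible to the differentiation process while remaining fully available to $\MG$.

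Concretely, I would set $\GG = \RRR_0 \cup \GG_0$ and build $\GG_0$ by lifting the construction of Proposition~\ref{obs:2} to the last coordinates. Writing $\TT^\omega = \TT \times \TT^{1,\omega}$, I realize the objects $f_n$, $U_n = V_{n^2}$, and $E_n$ of Proposition~\ref{obs:2} inside $\TT^{1,\omega}$, and let $\GG_0$ consist of all sets $\TT \times S$, where $S$ runs over the translated copies $h + U_n$ ($n \in \NN$, $h \in \TT^{1,\omega}$) used there. Since the first coordinate of every such set is all of $\TT$, one has ${\rm diam}(\TT \times S) \geq \tfrac14$ for each of them. This immediately yields (i$'$): for a.e.\ $g$ (those at which $\RRR_0$ differentiates the given $f$) and any contracting family $S_n \Rightarrow g$ in $\GG$, the condition ${\rm diam}(S_n) \to 0$ forces $S_n \in \RRR_0$ for all large $n$, whence $f_{S_n} \to f(g)$ by the differentiation property of $\RRR_0$. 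As $\RRR_0 \subset \GG$ supplies a contracting family at each point, $\GG$ is a genuine basis and it differentiates $L^1(\TT^\omega)$.

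For (ii$'$) I would lift the test functions so that they ignore the first coordinate: put $F_n(x_1, x_2, \dots) = f_n(x_2, x_3, \dots)$, so that $\|F_n\|_1 = \|f_n\|_1 = 2^{-n}$. By Proposition~\ref{obs:2}, each point of $E_n \subset \TT^{1,\omega}$ lies in the closure of some covering set $S = h + U_n$ with $(f_n)_S \geq 1$; lifting gives $(F_n)_{\TT \times S} = (f_n)_S \geq 1$, so $\MG F_n \geq 1$ on $\TT \times E_n$, a set of measure $m(E_n) > 1 - 2^{-n}$. Taking $\lambda$ slightly below $1$, the quantity $\lambda\, m(\TT \times E_n)/\|F_n\|_1$ is of order $2^n$, hence exceeds $2^k$ once $n$ is large; thus $\TT \times E_n \in \EEE_k$ with measure tending to $1$, which gives $\delta_k^\GG = 1$ for every $k$. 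The main obstacle, and the reason a naive attempt fails, is precisely to make $\MG f$ large on a set of measure near $1$ while keeping $\|f\|_1$ small: had one tried $f = \chi_A$ with $A$ small and demanded averages close to $1$, the witnessing sets would have to lie almost entirely inside $A$, so the union of their closures could not have measure near $1$. The device that overcomes this is the one inherited from Proposition~\ref{obs:2}, namely tall thin spikes whose height makes a tiny overlap with a covering set produce an average $\geq 1$; combining these spikes with the full-first-coordinate trick simultaneously realizes $\delta_k^\GG = 1$ and keeps the offending sets out of every contracting sequence.
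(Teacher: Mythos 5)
Your proposal is correct, and it follows the paper's skeleton---the paper likewise takes $\GG = \RRR_0 \cup \GG_0$ with $\GG_0$ a family of ``non-local'' sets (small measure but not small diameter), so that contracting families eventually live in $\RRR_0$ while $\MG$ is large on sets of almost full measure---but both ingredients are realized differently. For the bad family the paper takes $\GG_0 = \{ V_n \cup (h_n + V_n) : n \in \NN, \, h_n \in H_n \}$, all of whose members contain ${\bf 0}$ in their closure, and gets (i') by the clustering-at-${\bf 0}$ argument of Proposition~\ref{obs:3}, valid at every $g \neq {\bf 0}$; you instead enforce the uniform bound ${\rm diam}(G) \geq \frac{1}{4}$ by placing a full $\TT$-factor in the first coordinate, which excludes $\GG_0$ from every contracting sequence with no exceptional point---marginally cleaner. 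For (ii') the paper is more economical: for fixed $k$ it uses the single function $f_k = \chi_{V_{k+2}}$ with $\lambda = \frac{1}{3}$, observing that a.e.\ $g$ lies in a tile $h + V_{k+2}$ with $h \in H_{k+2}$, so the average of $f_k$ over $V_{k+2} \cup (h + V_{k+2}) \in \GG_0$ is at least $\frac{1}{2}$; since $2^k \|f_k\|_1 = \frac{1}{4} < \frac{1}{3}$, this exhibits a set of full measure in $\EEE_k$ outright. You instead lift the spike functions of Proposition~\ref{obs:2} into coordinates $2,3,\dots$, producing sets in $\EEE_k$ of measure greater than $1 - 2^{-n}$ and concluding $\delta_k^\GG = 1$ by taking suprema; this is sound (your quantitative check that $\lambda(1-2^{-n}) > 2^{k-n}$ for large $n$ is right), only heavier. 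In short, the paper's choices buy a countable $\GG_0$ and a one-line verification of (ii'), while yours buy a differentiation argument with no exceptional point and the reuse of machinery already built for Proposition~\ref{obs:2}.
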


\noindent Indeed, let
$$
\GG = \RRR_0 \cup \GG_0,
$$
where $\RRR_0$ is the restricted Rubio de Francia basis, while
$$
\GG_0 = \{ V_n \cup (h_n + V_n) : n \in \NN, \, h_n \in H_n \}. 
$$
We shall show that (i') and (ii') are satisfied for this choice of $\GG$.

First, notice that (i') can be verified by invoking the argument which was used in Proposition~\ref{obs:3} to obtain (i) for the basis $\DD$. Indeed, it suffices to observe that the closure of each element of $\GG_0$ contains ${\bf 0}$. 

Let us now fix $k \in \NN$ and take $f_k = \chi_{V_{k+2}}$. Note that $m(V_{k+2}) = 2^{-(k+2)}$ and hence $\frac{1}{3} > 2^k \|f_k\|_1$. Since $\MG f_k(g) > \frac{1}{3}$ holds for almost every $g \in \TT^\omega$, we conclude that $\delta_k^\GG = 1$.      
\medskip

It is instructive to look closer at the structure of $\GG$, in order to indicate where its degeneracy lies. Observe that, in particular, there is no implication saying that the diameter of a set $G \in \GG$ is small whenever $m(G)$ is small. This fact causes a certain discrepancy between the two issues we want to relate. Namely, $\MB$ may behave badly because of some non-local effects which do not play a role in the problem of differentiation. Thus, we formulate an additional condition on $\BB$ which makes such a situation impossible:

\begin{enumerate}[leftmargin=\parindent,align=left,labelwidth=\parindent,label=(M)]
\item there exists a set $F \subset \TT^\omega$ of full measure such that for each $g \in F$ we have 
$$
\forall_{\epsilon > 0} \ \exists_{\delta > 0} \ \forall_{E \in \BB(g)} \ \big(m(E) < \delta \big) \implies \big( {\rm diam}(E) < \epsilon \big). $$ 
\end{enumerate}

Finally, we are ready to prove the following result which summarizes the considerations in this part of the article.

\begin{theorem}\label{thm:2}
Let $\BB$ be an arbitrary differentiation basis (not necessarily of non-centered type). If $\lim_{k \rightarrow \infty} \delta_k^\BB = 0$, then $\BB$ differentiates $L^1(\TT^\omega)$. On the other hand, if $\lim_{k \rightarrow \infty} \delta_k^\BB = \delta_0 > 0$ and, additionally, $\BB$ satisfies {\rm (M)}, then $\BB$ does not differentiate $L^1(\TT^\omega)$.
\end{theorem}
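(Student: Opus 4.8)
The plan is to prove the two implications independently: the first (sufficiency of $\lim_k \delta_k^\BB = 0$) by a density argument that only needs a weak‑type‑like bound extracted from the numbers $\delta_k^\BB$, and the second (failure under (M)) by an explicit construction in which condition (M) is exactly what converts control of the maximal operator into genuine contracting families.

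\emph{Sufficiency.} First I would record the dichotomy hidden in the definition of $\EEE_k$: for every $\psi \in L^1(\TT^\omega)$, every $\alpha > 0$, and every $k \in \NN$, the super-level set either satisfies $\alpha\, m(\{\MB\psi > \alpha\}) \le 2^k\|\psi\|_1$, or it lies in $\EEE_k$ (with witnesses $\psi$ and $\alpha$) and hence has measure at most $\delta_k^\BB$; in either case
$$
m(\{\MB\psi > \alpha\}) \le \max\Big(\delta_k^\BB,\ \tfrac{2^k\|\psi\|_1}{\alpha}\Big),
$$
where, to avoid measurability issues for $\MB\psi$, the inequality is read for outer measure by applying the dichotomy to measurable subsets. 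Then I would run the usual approximation: given $f \in L^1(\TT^\omega)$ and $\alpha>0$, split $f = \phi + \psi$ with $\phi \in C(\TT^\omega)$ and $\|\psi\|_1 < \eta$ (possible since $\TT^\omega$ is a compact metric space under $\rho$). Because $S_n \Rightarrow g$ forces the $S_n$ into balls of radius $\mathrm{diam}(S_n) \to 0$ around $g$, one has $\phi_{S_n} \to \phi(g)$ for every $g$ and every contracting family. Setting $\Omega f(g) = \sup_{S_n \Rightarrow g}\limsup_n |f_{S_n} - f(g)|$, subadditivity and $|\psi_{S_n}| \le \MB\psi(g)$ give $\Omega f \le \Omega\psi \le \MB\psi + |\psi|$ pointwise, so $\{\Omega f > 2\alpha\} \subset \{\MB\psi > \alpha\} \cup \{|\psi| > \alpha\}$. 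Choosing first $k$ with $\delta_k^\BB < \tau$ (possible since $\delta_k^\BB \to 0$) and then $\eta$ so small that $2^k\eta/\alpha < \tau$, the displayed bound together with Chebyshev's inequality yields $m(\{\Omega f > 2\alpha\}) \le 2\tau$; letting $\tau \to 0$ and then $\alpha \to 0$ shows $\Omega f = 0$ a.e., which is precisely \eqref{cond:1} for $f$.

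\emph{Failure under (M).} For the second implication I would build one counterexample. As $(\delta_k^\BB)_k$ is non-increasing with limit $\delta_0 > 0$, for each $k$ I can choose a measurable $E_k \in \EEE_k$ with $m(E_k) > \delta_0/2$ and witnesses $f_k \ge 0$, $\lambda_k > 0$ with $\MB f_k > \lambda_k$ on $E_k$ and $\lambda_k m(E_k) > 2^k\|f_k\|_1$; rescaling by $1/\lambda_k$ I may assume $\MB f_k > 1$ on $E_k$ and $\|f_k\|_1 < 2^{-k}$, and after discarding an initial segment of indices I may assume $\sum_k \|f_k\|_1 < \delta_0/4$. Put $f = \sum_k f_k \in L^1(\TT^\omega)$. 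The decisive observation is that for $g \in E_k$ a set $B_{k,g} \in \BB(g)$ with $(f_k)_{B_{k,g}} > 1$ obeys $m(B_{k,g}) < \int_{B_{k,g}} f_k \le \|f_k\|_1 < 2^{-k}$, so its measure tends to $0$ as $k \to \infty$. Now let $G = \bigcap_n\bigcup_{k\ge n} E_k$ and let $F$ be the full-measure set from (M). By the reverse Fatou inequality $m(G) \ge \limsup_k m(E_k) \ge \delta_0/2$, hence $m(G\cap F) \ge \delta_0/2$. For $g \in G\cap F$ there are infinitely many indices $k$ with $g \in E_k$, and for these $m(B_{k,g}) < 2^{-k} \to 0$; condition (M) then forces $\mathrm{diam}(B_{k,g}) \to 0$ along a subsequence, producing a family $Q_n \Rightarrow g$ with $f_{Q_n} \ge (f_{k_n})_{Q_n} > 1$ (using $f_j \ge 0$), so $\limsup_n f_{Q_n} \ge 1$. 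Finally $m(\{f \ge 1\}) \le \|f\|_1 < \delta_0/4$, so on the positive-measure set $(G\cap F)\setminus\{f\ge 1\}$ we get $f(g) < 1 \le \limsup_n f_{Q_n}$, i.e.\ \eqref{cond:1} fails on a set of positive measure.

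I expect the genuine difficulty to be the step mediated by (M). The maximal operator only supplies averaging sets $B_{k,g}$ with large average over $f_k$; there is a priori no reason these sets should shrink to $g$, and without contraction they are useless for the differentiation statement. The whole point is to match the quantitative smallness $m(B_{k,g}) < 2^{-k}$ coming from $\|f_k\|_1 < 2^{-k}$ against the diameter control in (M), while simultaneously keeping $\|f\|_1$ small enough that $f(g) < 1$ survives on a set of positive measure; balancing these two constraints is the heart of the construction, and it is exactly where (M) (rather than mere positivity of $\delta_0$) is indispensable.
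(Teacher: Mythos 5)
Your proposal is correct and follows essentially the same route as the paper's proof: for the positive direction, the same dichotomy extracted from the definition of $\EEE_k$ (either the super-level set of $\MB\psi$ obeys a weak-type bound or it lies in $\EEE_k$ and has measure at most $\delta_k^\BB$) combined with approximation by continuous functions and Chebyshev; for the negative direction, the same construction $f=\sum_k f_k$ from near-extremal sets $E_k\in\EEE_k$ of measure $>\delta_0/2$, where smallness of $\|f_k\|_1$ forces the witnessing averaging sets to have small measure, (M) converts this into contraction, reverse Fatou gives the limsup set measure $\geq\delta_0/2$, and removing $\{f\geq 1\}$ (of measure $<\delta_0/4$) leaves a positive-measure set where differentiation fails. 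The only differences are cosmetic: your oscillation operator $\Omega f$ versus the paper's sets $L_\epsilon(f)$, and rescaling so that $\lambda_k=1$ and discarding an initial segment of indices instead of the paper's normalization $\|f_n\|_1=\delta_0/2^{n+2}$.
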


\begin{proof}
First we consider the case $\lim_{k \rightarrow \infty} \delta_k^\BB = 0$. Let $f \in L^1(\TT^\omega)$ and fix $\epsilon > 0$. Our goal is to estimate from above the size of the set
$$
L_\epsilon(f) = \Big\{ g \in \TT^\omega : \exists_{B_n \Rightarrow g} \ \big( \limsup_{n \rightarrow \infty} |f_{B_n} - f(g) | > \epsilon \big) \Big\}.
$$
Take $f_\epsilon$ continuous and satisfying $\| f - f_\epsilon \| < \epsilon^2 / (10 \cdot 2^{k_\epsilon})$, where $k_\epsilon$ is such that $\delta_{k_\epsilon}^\BB \leq \frac{\epsilon}{2}$ (notice that continuous functions are dense in $L^1(\TT^\omega)$ by \cite[Proposition~7.9]{Fo}). For each $g \in \TT^\omega$ and $B \in \BB(g)$ we have the estimate
\begin{equation}\label{eq:12}
| f_B - f(g) | \leq | f_B - (f_\epsilon)_B | + | (f_\epsilon)_B - f_\epsilon(g) | + |f_\epsilon(g) - f(g)|.
\end{equation}
Thus, if $| f_B - f(g) | > \epsilon$, then at least one of the three quantities on the right hand side of \eqref{eq:12} is greater than $\frac{\epsilon}{3}$. Observe that, by continuity of $f_\epsilon$, we have $| (f_\epsilon)_B - f_\epsilon(g) | \leq \frac{\epsilon}{3}$ if ${\rm diam}(B)$ is sufficiently small. Moreover, observe that $| f_B - (f_\epsilon)_B | = | (f-f_\epsilon)_B | \leq \MB(f - f_\epsilon)(g)$. It will be convenient to introduce the auxiliary sets
$$
L_{\epsilon,1} (f) = \{ g \in \TT^\omega : | f_\epsilon(g) - f(g) | > \epsilon / 3 \}
$$
and 
$$
L_{\epsilon,2} (f) = \{ g \in \TT^\omega : \MB(f - f_\epsilon)(g) > \epsilon / 3 \}.
$$
By \eqref{eq:12} and the arguments mentioned above we see that $ L_\epsilon(f) \subset L_{\epsilon,1} (f) \cup L_{\epsilon,2} (f)$. We now estimate the sizes of $L_{\epsilon,1} (f)$ and $L_{\epsilon,2} (f)$, respectively. First, it is easy to see that
$$
m(L_{\epsilon,1} (f))  \leq \frac{3}{\epsilon} \, \|f - f_\epsilon \|_1 \leq \frac{\epsilon}{2}.
$$ 
Moreover, we have $ m(L_{\epsilon,2} (f)) \leq \epsilon / 2$. Indeed, if $ m(L_{\epsilon,2} (f)) > \frac{\epsilon}{2}$, then
$$
\frac{\epsilon}{3} \, m(L_{\epsilon,2} (f)) \geq \frac{\epsilon^2}{6} > 2^{k_\epsilon} \| f - f_\epsilon \|_1,
$$ 
which contradicts the assumption $\delta_{k_\epsilon}^\BB \leq \frac{\epsilon}{2}$. Consequently, we obtain $m(L_\epsilon(f)) < \epsilon$ and, since $\epsilon$ was arbitrary, we conclude that for a.e. $g \in \TT^\omega$,
$$
\lim_{n \rightarrow \infty} f_{B_n} = f(g), \qquad B_n \Rightarrow g. 
$$ 
Therefore, $\BB$ differentiates $L^1(\TT^\omega)$. 

Next, consider the case $\lim_{k \rightarrow \infty} \delta_k^\BB = \delta_0 > 0$ and assume that (M) is satisfied. For each $n \in \NN$ we can find $\lambda_n > 0$, $E_n \subset \TT^\omega$ and a non-negative function $f_n \in L^1(\TT^\omega)$ such that
$$
E_n = \{g \in \TT^\omega : \MB f_n(g) > \lambda_n \}
$$
and
$$
\lambda_n  m(E_n) > \frac{2^{n+2}}{\delta_0} \, \| f_n \|_1 \quad {\rm and } \quad m(E_n) > \frac{\delta_0}{2}. 
$$
In addition, we can assume that $\| f_n \|_1 = \delta_0 / 2^{n+2}$. Indeed, if $\| f_n \|_1 \neq \delta_0 / 2^{n+2}$, then we simply replace $f_n$ by $\alpha f_n$ with $\alpha = \delta_0 / (2^{n+2} \|f_n\|_1)$. Observe that $\lambda_n > 1$ since $m(E_n) \leq 1$. Finally, we denote $f = \sum_{n \in \NN} f_n$ and $E = \bigcap_{n \in \NN} \bigcup_{i \geq n} E_i$. For each $g \in E$ there exists $\{ B_n : n \in \NN\} \subset \BB(g)$ such that $\lim_{n \rightarrow \infty} m(B_n) = 0$ and
$$
\limsup_{n \rightarrow \infty} f_{B_n} \geq \limsup_{n \rightarrow \infty} (f_n)_{B_n} \geq 1. 
$$ 
Moreover, by (M) we get $B_n \Rightarrow g$ provided that $g \in E \cap F$, where $F$ is the set specified in the statement of condition (M). Thus, for the set 
$$
E_f = \big\{ g \in \TT^\omega : \exists_{B_n \Rightarrow g} \ \big(\limsup_{n \rightarrow \infty} f_{B_n} \geq 1 \big)   \big\}
$$
we have the estimate
$$
m(E_f) \geq m(E \cap F) = m(E) \geq \liminf_{n \rightarrow \infty} m(E_n) \geq \frac{\delta_0}{2}, 
$$ 
where in the second inequality we used the fact that $m$ is finite. On the other hand, since $\| f \|_1 = \sum_{n \in \NN} \|f_n \|_1 = \delta_0 / 4$, the set $A = \{ g \in \TT^\omega : f(g) \geq 1\}$ has measure at most $\delta_0 / 4$. Therefore,
$$
m(E_f \setminus A) \geq \frac{\delta_0}{4} > 0
$$
and for each $g \in E_f \setminus A$ there exists a family $\{B_n : n \in \NN \}$ such that $B_n \Rightarrow g$ and
$$
\neg \, \big( \lim_{n \rightarrow \infty} f_{B_n} = f(g) \big).
$$
Consequently, $\BB$ does not differentiate $L^1(\TT^\omega)$. 
\end{proof}

One more comment is in order here. The role of condition {\rm (M)} is to control the impact of the non-local part of $\MB$. Another approach is to use the local maximal function in the definition of $\delta_k^\BB$ (see \cite[Theorem~1.1, Chapter III]{G}, for example). Namely, consider an arbitrary sequence $(\alpha_k)_{k=1}^\infty \subset (0, \infty)$ satisfying $\lim_{k \rightarrow \infty} \alpha_k = 0$. We let $\tilde{\delta}_k^\BB = \sup_{E \in \tilde{\EEE}_k} m(E)$, where $\tilde{\EEE}_k$ is defined as $\EEE_k$ with $\MB$ replaced by $\MB_{\alpha_k}$. The following version of Theorem \ref{thm:2} relates the problem of differentiation to the behavior of $(\tilde{\delta}_k^\BB)_{k=1}^\infty$.

\begin{theorem4.3}\label{thm:2'}
Let $\BB$ be an arbitrary differentiation basis (not necessarily of non-centered type). Then $\BB$ differentiates $L^1(\TT^\omega)$ if and only if $\lim_{k \rightarrow \infty} \tilde{\delta}_k^\BB = 0$.	
\end{theorem4.3}

\begin{proof}
The proof is very similar to the proof of Theorem \ref{thm:2}. We only sketch the needed changes.

Assume $\lim_{k \rightarrow \infty} \tilde{\delta}_k^\BB = 0$ and fix $\epsilon > 0$ and $f \in L^1(\TT^\omega)$. We define $L_\epsilon(f)$, $f_\epsilon$, and $L_{\epsilon,1}(f)$ as before with $k_\epsilon$ such that $\tilde{\delta}_{k_\epsilon}^\BB \leq \frac{\epsilon}{2}$. We also let
$$
L_{\epsilon,2}^{(k)} (f) = \{ g \in \TT^\omega : \MB_{\alpha_k}(f - f_\epsilon)(g) > \epsilon / 3 \}, \qquad k \in \NN.
$$ 	
Since $\lim_{k \rightarrow \infty} \alpha_k = 0$, we obtain
$$
L_\epsilon(f) \subset L_{\epsilon,1}(f) \cup \bigcap_{k \in \NN} \bigcup_{i \geq k} L_{\epsilon,2}^{(i)} (f) = L_{\epsilon,1}(f) \cup \bigcap_{k \in \NN} L_{\epsilon,2}^{(k)} (f) \subset L_{\epsilon,1}(f) \cup L_{\epsilon,2}^{(k_\epsilon)} (f).
$$
Then it suffices to see that $m(L_{\epsilon,2}^{(k_\epsilon)} (f)) \leq \frac{\epsilon}{2}$ and, consequently, $m(L_{\epsilon}(f)) \leq \epsilon$.

Now assume $\lim_{k \rightarrow \infty} \tilde{\delta}_k^\BB = \tilde{\delta}_0 > 0$. We construct $f$, $E_f$, and $A$ as before, using $\tilde{\delta}_0$ instead of $\delta_0$. The only modification is that now for each $n \in \NN$ the set
$$
\tilde{E}_n = \{g \in \TT^\omega : \MB_{\alpha_k} f_n(g) > \lambda_n \}
$$
plays the role of $E_n$ (in particular, $m(\tilde{E}_n) \geq \frac{\tilde{\delta}_0}{2}$). Finally, we set $\tilde{E} = \bigcap_{n \in \NN} \bigcup_{i \geq n} \tilde{E}_i$ and observe that for each $g \in \tilde{E}$ there exists $B_n \Rightarrow g$ such that
$$
\limsup_{n \rightarrow \infty} f_{B_n} \geq \limsup_{n \rightarrow \infty} (f_n)_{B_n} \geq 1. 
$$
Consequently, $m(E_f \setminus A) \geq \frac{\tilde{\delta}_0}{4}> 0$.
\end{proof}

We end our discussion with the following remark. In the proof of Theorem \ref{thm:2} we referred to the measure space only twice, namely, when we used the fact that the set of continuous functions is dense in $L^1$ and when we needed our measure to be finite. Thus, in fact, the conclusion of Theorem~\ref{thm:2} (and hence also Theorem~4.3') remains true if one replaces $\TT^\omega$ with any space for which the two conditions mentioned above are satisfied. 

\subsection*{Acknowledgements}

The author is grateful to Luz Roncal for a fruitful
discussion which inspired him to write this article. The author would also like to thank the anonymous referees for their careful reading of the manuscript and their valuable suggestions which led to an improvement of the presentation.

\end{document}